\author{Hyun Chul Jang}
\author{Pengzi Miao}
\address{Department of Mathematics, University of Miami}
\email{h.jang@math.miami.edu, pengzim@math.miami.edu}
\title{Hyperbolic Mass via Horospheres}
\begin{document}

\newtheorem{theorem}{Theorem} 
\newtheorem{claim}[theorem]{Claim}
\newtheorem{lemma}[theorem]{Lemma}
\newtheorem{proposition}[theorem]{Proposition}
\newtheorem{corollary}[theorem]{Corollary}
\theoremstyle{definition}
\newtheorem{definition}[theorem]{Definition}
\theoremstyle{remark}
\newtheorem{remark}[theorem]{Remark}

\numberwithin{equation}{section}
\numberwithin{theorem}{section}
\numberwithin{figure}{section}

\begin{abstract}
	We derive geometric formulas for the mass of asymptotically hyperbolic manifolds using coordinate horospheres.  
	As an application, we obtain a new rigidity result of hyperbolic space: if a complete asymptotically hyperbolic manifold has scalar curvature lower bound $-n(n-1)$ and is isometric to hyperbolic space outside a coordinate horosphere, then the manifold is isometric to hyperbolic space. In addition, we apply our formula to investigate regions near infinity that do not contribute to the mass quantity, which leads to improved rigidity results of hyperbolic space.
\end{abstract}

\maketitle

\section{Introduction}\label{sec:introduction}

The mass of an asymptotically hyperbolic Riemannian manifold introduced in \cites{Wang.X:2001,Chrusciel-Herzlich:2003} serves as a global geometric invariant that measures the deviation from hyperbolic space. In this paper, we derive geometric formulas of the mass for asymptotically hyperbolic manifolds using large coordinate horospheres, and state some relevant rigidity results.

Let $(\mathbb{H}^n,b)$ be hyperbolic space as the upper sheet of the hyperboloid in the Minkowski space:
\[
	\mathbb{H}^n=\{(z,t)\in\mathbb{R}^{n,1}:z_1^2+\cdots+z_n^2-t^2=-1, t>0\}.
\]
Let $r=\sqrt{z_1^2+\cdots+z_n^2}$, then the metric $b$ is written as
\[
	b=\frac{dr^2}{1+r^2}+r^2g_{\mathbb{S}^{n-1}}.
\]

Following \cite{Chrusciel-Herzlich:2003}, a Riemannian manifold $(M^n,g)$ is said to be asymptotically hyperbolic if there exist a compact set $K\subset M$ and a diffeomorphism $\Phi:M\setminus K\to \mathbb{H}^n\setminus B_R(0)$, where $B_R(0)=\{r<R\}$, such that $h:=(\Phi^{-1})^* g-b$ satisfies
\begin{enumerate}
	\item as $r\to\infty$,
	\[
		|h|_b+|\mathring{\nabla}h|_b+|\mathring{\nabla}^2 h|_b=O(r^{-q}),\quad q>\frac{n}{2},
	\]
	where $\mathring{\nabla}$ is the covariant derivative with respect to $b$.
	\item $\int_{M\setminus K} r(R_g+n(n-1)) d\mu_g <\infty$ where $R_g$ and $d\mu_g$ are the scalar curvature and the volume element of $g$, respectively.
\end{enumerate}
The mass of asymptotically hyperbolic manifolds is defined as a $(n+1)$-vector using the so-called mass integral, which is defined as the following limit of the integral on large coordinate spheres:
\begin{equation}\label{eqn:AH mass integral}
	H_{\Phi}(V)=\lim_{r\to\infty}\int_{S_r} (V\,\mathrm{div }_b h-V\,d(\mathrm{tr}_{b} h)+(\mathrm{tr}_{b} h)d V-h(\mathring{\nabla} V,\cdot))(\nu_0)\,d\sigma_b
\end{equation}
for $V\in \mathcal{S}(\mathbb{H}^n)$, where $\mathcal{S}(\mathbb{H}^n)$ is the space of static potentials on $\mathbb{H}^n$, given by
\[
	\mathcal{S}(\mathbb{H}^n)=\textrm{span}\{t,z_1,\ldots,z_n\},\textrm{ where }t=\sqrt{1+r^2}.
\]
The components of the mass vector $(p_0,p_1,\ldots,p_n)$ of $(M^n,g)$ are defined as
\[
	p_0=H_{\Phi}(t),\quad p_i=H_{\Phi}(z_i) \text{ for }i=1,\ldots,n.
\]
Whereas its components may depend on the exterior coordinate chart $\Phi$, the (squared)-Minkowskian length of the mass vector 
\[
	m(g)^2=p_0^2-\sum_{i=1}^n p_i^2
\]
is a geometric invariant. The Riemannian positive mass theorem for asymptotically hyperbolic manifolds states that if $R_g\ge -n(n-1)$, then $p_0^2\ge\sum_{i=1}^n p_i^2$, and equality holds if and only if $(M,g)$ is isometric to $\mathbb{H}^n$. This theorem was first proved in \cites{Wang.X:2001,Chrusciel-Herzlich:2003} under spinor assumption. In \cite{Andersson:2008du}, the spinor assumption was replaced by the restriction on dimension and the geometry at infinity. These assumptions have recently been removed in \cites{Chrusciel:2018co,Chrusciel.2019,Huang.2020}. We also remark that Sakovich \cite{Sakovich:2020aa} gave another proof for the $3$-dimensional case using the Jang equation.

To state our main theorems, let $\mathcal{H}_L$ for large enough $L>0$ denote a horosphere 
\[
	\mathcal{H}_L=\{z\in\mathbb{H}^n:\sqrt{1+r^2}-z_1=e^L\}.
\]
Note that $\mathcal{H}_L=\{y\in\mathbb{H}^n: y_1=e^{-L}\}$ where $\{y_1,\ldots,y_n\}$ is the half-space-model coordinates (see Figure \ref{fig:horospheres}). Geometrically, horospheres are complete noncompact (strongly) stable CMC hypersurfaces in $\mathbb{H}^n$ with mean curvature $n-1$.

\begin{figure}[ht]
	\centering
	\fontsize{9.5pt}{9.5pt}\selectfont
	\def\svgwidth{0.98\textwidth}%
	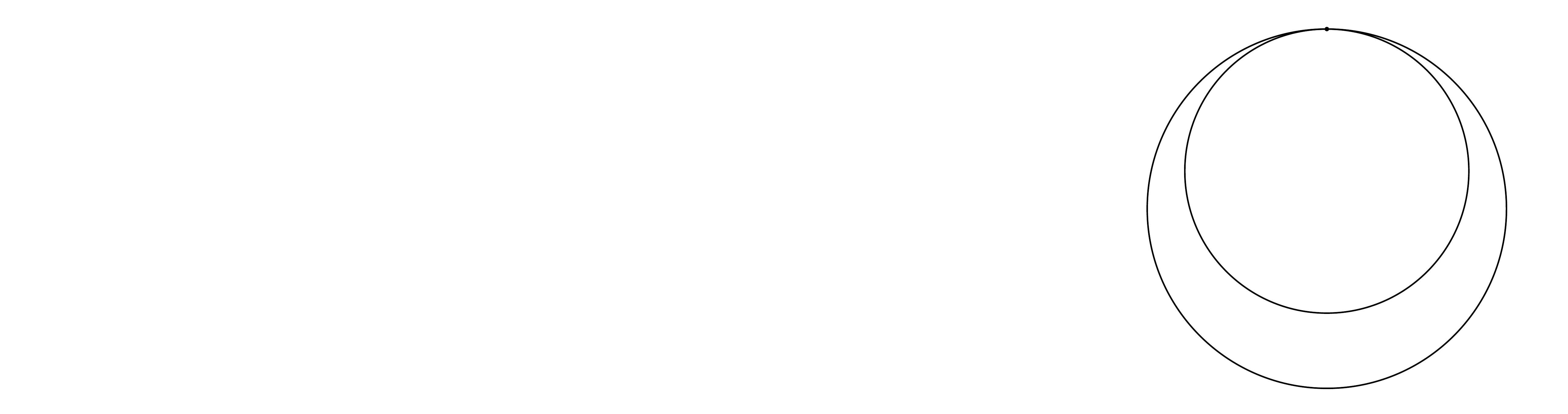%
	\caption{$\mathcal{H}_L$ in the hyperboloid, half-space, and ball models}\label{fig:horospheres}
\end{figure}

\begin{theorem}\label{thm:AH_mass_horospheres}
	Let $(M^n,g),n\ge 3,$ be an asymptotically hyperbolic manifold with metric falloff rate $q>\frac{n}{2}$. Let $\nu_g$ denote the unit normal vector to $\mathcal{H}_L$ in $(M,g)$ pointing toward $\{y_1=0\}$ (see Figure \ref{fig:horospheres}). Let $H_g$ be the mean curvature of $\mathcal{H}_L$ with respect to $\nu_g$ in $(M,g)$. Then, as $L\to\infty$,
	\begin{equation}\label{eqn:AH_mass_horospheres}
		p_0-p_1=2\int_{\mathcal{H}_L} V(H_b-H_g)\, d\sigma_g+o(1)
	\end{equation}
	where $p_0,\, p_1$ are the components of the mass vector, $V=\sqrt{1+r^2}-z_1=\frac{1}{y_1}$ on $M\setminus K$ and $H_b=n-1$ is the mean curvature of horospheres in hyperbolic space.
\end{theorem}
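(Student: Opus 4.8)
Since $t$ and $z_1$ are static potentials, so is $V=t-z_1$, and the linearity of the flux functional in $V$ gives
\[
p_0-p_1=H_\Phi(t)-H_\Phi(z_1)=H_\Phi(V),\qquad V=t-z_1=\tfrac1{y_1}.
\]
The first thing I would record is that the coordinate horospheres are precisely the level sets $\mathcal H_L=\{V=e^L\}$, and that in the half-space coordinates $\mathring\nabla V=V\nu_0$ with $|\mathring\nabla V|_b=V$, where $\nu_0=-y_1\partial_{y_1}$ is the $b$-unit normal pointing toward $\{y_1=0\}$. Consequently $\partial_{\nu_0}V=V$ and $h(\mathring\nabla V,\nu_0)=Vh(\nu_0,\nu_0)$, so the Chru\'sciel--Herzlich one-form $\mathbb U(V,h)$ appearing in \eqref{eqn:AH mass integral} collapses on $\mathcal H_L$ to
\[
\begin{aligned}
&\big(V\,\mathrm{div}_b h-V\,d(\mathrm{tr}_b h)+(\mathrm{tr}_b h)\,dV-h(\mathring\nabla V,\cdot)\big)(\nu_0)\\
&\qquad=V\big[\mathrm{div}_b h(\nu_0)-\partial_{\nu_0}(\mathrm{tr}_b h)+\mathrm{tr}_b h-h(\nu_0,\nu_0)\big].
\end{aligned}
\]

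\textbf{Step 1: surface independence and localization.}
Because $V$ is static on $(\mathbb H^n,b)$, i.e. $\mathring\nabla^2V=Vb$ and $DR_b^*V=0$, the one-form satisfies $\mathrm{div}_b\,\mathbb U(V,h)=V\,DR_b[h]+Q(V,h)$, where $DR_b[h]=R_g+n(n-1)+O(|h||\mathring\nabla^2h|+|\mathring\nabla h|^2)$ and $Q$ is quadratic in $(h,\mathring\nabla h)$. The decay $|h|+|\mathring\nabla h|+|\mathring\nabla^2h|=O(r^{-q})$ with $q>\tfrac n2$, together with condition $\int r(R_g+n(n-1))<\infty$, makes $\mathrm{div}_b\,\mathbb U(V,h)$ integrable over the end. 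Here the geometry must be handled with care: unlike the coordinate spheres $S_r$, the horosphere does not enclose the end. However, as $L\to\infty$ the horosphere $\mathcal H_L$ sweeps out all of $\partial_\infty\mathbb H^n$ \emph{except} its base point $\{z_1=+\infty\}$, and there $V=t-z_1\to0$ suppresses the integrand. Thus I would apply the divergence theorem on the region bounded by $\mathcal H_L$ and a coordinate sphere $S_r$ with a small cap around the base point excised, estimate the cap contribution by the weight $V\to0$, and let $r,L\to\infty$ to obtain $\displaystyle\lim_{L\to\infty}\int_{\mathcal H_L}\mathbb U(V,h)(\nu_0)\,d\sigma_b=H_\Phi(V)$.

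\textbf{Step 2: matching with the mean-curvature defect.}
Since $\mathcal H_L$ is a fixed set in the coordinate chart, $H_g$ is the mean curvature of this fixed hypersurface in $g=b+h$, and the standard linearization of the mean curvature of a level set of $V$ under $b\mapsto b+h$ yields, with $\Sigma=\mathcal H_L$,
\[
\mathrm{div}_b h(\nu_0)-\partial_{\nu_0}(\mathrm{tr}_b h)+\mathrm{tr}_b h-h(\nu_0,\nu_0)=2(H_b-H_g)+\tfrac1V\,\mathrm{div}_\Sigma W+O\big(|h|\,|\mathring\nabla h|\big),
\]
where $W$ is a tangential vector field built from $h(\nu_0,\cdot)^\top$. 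The crucial input is that $\mathring\nabla^2V=Vb$ forces $\Sigma$ to be totally umbilic with $A_b=b|_\Sigma$ and $H_b=n-1$, which is exactly what converts the tangential trace $\mathrm{tr}_b h-h(\nu_0,\nu_0)=\mathrm{tr}_\Sigma h^\top$ into curvature terms. Multiplying by $V$ and integrating, the divergence term contributes $\int_{\mathcal H_L}\mathrm{div}_\Sigma W\,d\sigma_b$, which vanishes up to edge terms, and replacing $d\sigma_b$ by $d\sigma_g$ costs only $O(r^{-q})$; combining with Step~1 gives $p_0-p_1=2\int_{\mathcal H_L}V(H_b-H_g)\,d\sigma_g+o(1)$.

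\textbf{Main obstacle.}
The delicate point is not the algebra but the noncompactness of $\mathcal H_L\cong\mathbb R^{n-1}$: every integration by parts --- both the divergence theorem in Step~1 and the vanishing of $\int_{\mathcal H_L}\mathrm{div}_\Sigma W$ and of the quadratic remainder in Step~2 --- produces boundary contributions along the edge of the horosphere as it approaches the conformal boundary near the base point. Controlling these requires balancing the growth of the induced volume of $\mathcal H_L$ near its edge against the decay $O(r^{-q})$ of $h$ and the weight $V$, and I expect this is precisely where the hypothesis $q>\tfrac n2$ is used sharply.
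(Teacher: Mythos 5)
Your proposal is correct, but it takes a genuinely different route from the paper. The paper never integrates $\mathbb{U}$ over the complete horosphere directly: it exhausts the end by parabolic cylinders $C_L=\{|x_1|\le L,\ |\hat{x}|\le\sigma(L)\}$, applies Proposition \ref{prop:1form} face by face, and first proves the localized statement (Theorem \ref{thm:AH_mass_top_face}) that only the finite top face $\Sigma_L$ carries the mass --- the bottom face, lateral surface and edges being estimated away under the width condition \eqref{eqn:sigma_condition} --- and then obtains Theorem \ref{thm:AH_mass_horospheres} from one further tail estimate \eqref{eqn:horosphere estimate} on $\{x_1=L\}\setminus F_{+,L}$ with $\sigma(L)=e^{\frac{n}{2}L}$. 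You instead compare $S_r$ with the complete $\mathcal{H}_L$ across the region $B_r\cap\{V>e^L\}$, and the estimates you flag as the ``main obstacle'' do close, though your phrase ``$V\to 0$ suppresses the integrand'' is not quite enough as stated (on the spherical cap $S_r\cap\{V<e^L\}$ the weight $V$ ranges up to $e^L$, not to $0$): using $|\mathbb{U}(V)(\nu_0)|\le CV(|h|_b+|\mathring{\nabla}h|_b)\le Ce^Lr^{-q}$ together with the cap area $O\big((re^L)^{(n-1)/2}\big)$, the cap contributes $O\big(e^{L(n+1)/2}r^{\frac{n-1}{2}-q}\big)\to 0$ for fixed $L$ (only $q>\tfrac{n-1}{2}$ is needed here); the divergence identity for static $V$ gives $\mathrm{div}_b\,\mathbb{U}(V)=V(R_g+n(n-1))+V\,O(r^{-2q})$, which is absolutely integrable over the end by the definition of asymptotic hyperbolicity and $q>\tfrac{n}{2}$, so $\int_{\{V>e^L\}}|\mathrm{div}_b\,\mathbb{U}|\to 0$ as $L\to\infty$ since $\bigcap_L\{V>e^L\}=\emptyset$; the quadratic and $d\sigma_b$-versus-$d\sigma_g$ errors on $\mathcal{H}_L$ total $O(e^{(n-2q)L})$, which is where $q>\tfrac{n}{2}$ enters sharply, exactly as you predicted; and the edge terms of $\int_{\mathcal{H}_L}\mathrm{div}_\Sigma(VX)$ at $|\hat{x}|=\rho$ are $O\big(e^{(n-1-q)L}\rho^{n-2-2q}\big)\to 0$ as $\rho\to\infty$ since $q>\tfrac{n-2}{2}$. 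The trade-off: your route is leaner for Theorem \ref{thm:AH_mass_horospheres} itself --- no auxiliary width $\sigma(L)$, no condition \eqref{eqn:sigma_condition}, and each hypothesis is used transparently --- but it only produces the full-horosphere formula; the paper's cylinder decomposition yields the strictly stronger localized Theorem \ref{thm:AH_mass_top_face}, which is what feeds Corollary \ref{cor:rigidity_general}, Proposition \ref{prop:nonlocalizable}, and the discussion of regions not contributing to the mass, none of which your argument recovers.
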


The formula \eqref{eqn:AH_mass_horospheres} can also be used to compute more general expression of the mass vector as the following: let
\[
	V=\sqrt{1+r^2}-\sum_{i=1}^n a^iz_i,\quad \mathcal{H}_L=\{z\in\mathbb{H}^n:V=e^L\}
\]
where $(a^1)^2+\cdots +(a^n)^2=1$. Note that $\mathcal{H}_L$ represents the horospheres that are based at $\sum_{i=1}^n a^iz_i=+\infty$ in the conformal infinity.
Then, as $L\to\infty,$
\begin{equation}
	p_0-\sum_{i=1}a^ip_i=2\int_{\mathcal{H}_L}V(H_b-H_g)\, d\sigma_g+o(1).
\end{equation}
Especially, one can compute $p_0+ p_i$ by using $V=\sqrt{1+r^2}+ z_i$ and the corresponding horospheres which are based at the antipodal point of the base from the case of $p_0-p_i$. Therefore, each component of the mass vector is computable by Theorem \ref{thm:AH_mass_horospheres}.

Combining this formula and the positive mass theorem for asymptotically hyperbolic manifolds, the following rigidity can be obtained:
\begin{corollary}\label{cor:rigidity_horosphere}
	Let $(M^n,g),n\ge 3,$ be an asymptotically hyperbolic manifold with metric falloff rate $q>\frac{n}{2}$ and scalar curvature $R_g\ge -n(n-1)$. If $(M,g)$ is isometric to hyperbolic space outside a coordinate horoball, then $(M,g)$ is isometric to hyperbolic space.
\end{corollary}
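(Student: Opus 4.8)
The plan is to feed the exterior hypothesis directly into Theorem \ref{thm:AH_mass_horospheres} and then close the argument with the positive mass theorem. First I would normalize the geometry at infinity: after composing the chart $\Phi$ with an isometry of $(\mathbb{H}^n,b)$, I may assume the coordinate horoball outside of which $g$ agrees with $b$ is based at the ideal point $z_1=+\infty$, so that the adapted static potential is $V=\sqrt{1+r^2}-z_1=1/y_1$ and the relevant level sets are exactly the horospheres $\mathcal{H}_L=\{V=e^L\}$ appearing in the theorem. Writing the horoball as $\{V<e^{L_0}\}=\{y_1>e^{-L_0}\}$ for some fixed $L_0$, the hypothesis reads $g=b$ on the region $\{V\ge e^{L_0}\}$. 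Since $q>\frac n2$ and $R_g\ge -n(n-1)$, the mass vector is well defined and the positive mass theorem applies.

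The key observation is that for every $L>L_0$ the horosphere $\mathcal{H}_L$ lies in the region $\{V=e^L>e^{L_0}\}$ where $g=b$; there the coordinate horosphere is a genuine hyperbolic horosphere, so $H_g=H_b=n-1$ and the integrand $V(H_b-H_g)$ vanishes identically along $\mathcal{H}_L$. Hence the right-hand side of \eqref{eqn:AH_mass_horospheres} equals $o(1)$ as $L\to\infty$, while the left-hand side $p_0-p_1$ is a constant independent of $L$. Letting $L\to\infty$ forces $p_0-p_1=0$, that is $p_1=p_0$.

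It then remains to combine this with the positive mass theorem. Since $p_1=p_0$,
\[
	m(g)^2=p_0^2-\sum_{i=1}^n p_i^2=-\sum_{i=2}^n p_i^2\le 0 .
\]
On the other hand the hypotheses $R_g\ge -n(n-1)$ and $q>\frac n2$ place us in the setting of the positive mass theorem, which gives $p_0^2\ge\sum_{i=1}^n p_i^2$, i.e. $m(g)^2\ge 0$. Therefore $m(g)^2=0$ (and in fact $p_i=0$ for all $i\ge 2$ as well), and the rigidity part of the positive mass theorem yields that $(M,g)$ is isometric to $\mathbb{H}^n$.

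I expect the only genuine subtlety to be the reduction at infinity: one must verify that ``isometric to hyperbolic space outside a coordinate horoball'' is compatible with the normalization that makes $V=\sqrt{1+r^2}-z_1$ the correct static potential, so that the vanishing of $H_b-H_g$ on $\mathcal{H}_L$ for all large $L$ is legitimate. Once this is secured, the argument is immediate; no control of the $o(1)$ term is needed beyond the fact that it tends to $0$, since the content is simply that the $L$-independent quantity $p_0-p_1$ must equal its own limit.
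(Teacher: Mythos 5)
Your proof is correct and takes essentially the same route as the paper: the paper also deduces $p_0=p_1$ by noting that $H_g=H_b=n-1$ on $\mathcal{H}_L$ (resp.\ $\Sigma_L$) for all large $L$ and feeding this into the mass formula (the paper actually records the argument for the stronger Corollary \ref{cor:rigidity_general} via Theorem \ref{thm:AH_mass_top_face}, but the logic is identical), then closes with the equality case of the positive mass theorem. The only cosmetic difference is the final algebra: the paper uses the chain $p_1\ge p_0\ge\bigl(\sum_{i=1}^n p_i^2\bigr)^{1/2}\ge |p_1|$, whereas you equivalently argue $m(g)^2=-\sum_{i=2}^n p_i^2\le 0$ together with $m(g)^2\ge 0$.
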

Here, outside a coordinate horoball means a type of region $\{0<y_1<a\}$ for some constant $a>0$ in the half-space model. Besides being a consequence of the positive mass theorem, the scalar curvature rigidity of hyperbolic space with a compact set has been proved separately in the literature, see \cites{Min-Oo.1989,Andersson:1998bq,Andersson:2008du}. 
If one consider horospheres in hyperbolic space as a natural analog of hyperplanes in Euclidean space, it is known that Euclidean space does not have this kind of rigidity. Indeed, it is proved by Carlotto and Schoen \cite{Carlotto:2016dh} that there exists a nontrivial asymptotically flat metric on $\mathbb{R}^n$ with nonnegative scalar curvature such that the metric is isometric to Euclidean space in $(0,\infty)\times\mathbb{R}^{n-1}\subset\mathbb{R}^n$.



The key idea of the proof of Theorem \ref{thm:AH_mass_horospheres} is to use a family of parabolic cylinders $C_L$ defined as
\[
 	C_L=\{(y_1,\hat{y})\in\mathbb{H}^n:e^{-L}\le y_1\le e^{L},|\hat{y}|\le\sigma(L)\}
\]
where $\sigma(L)$ is a positive, monotone increasing function of $L$ such that $\sigma(L)\to\infty$ as $L\to\infty$ (see Figure \ref{fig:cylinder}). In fact, we obtain a more refined mass formula which only uses one surrounding surface of $C_L$:

\begin{figure}[ht]
	\centering
	\def\svgwidth{3.5in}%
	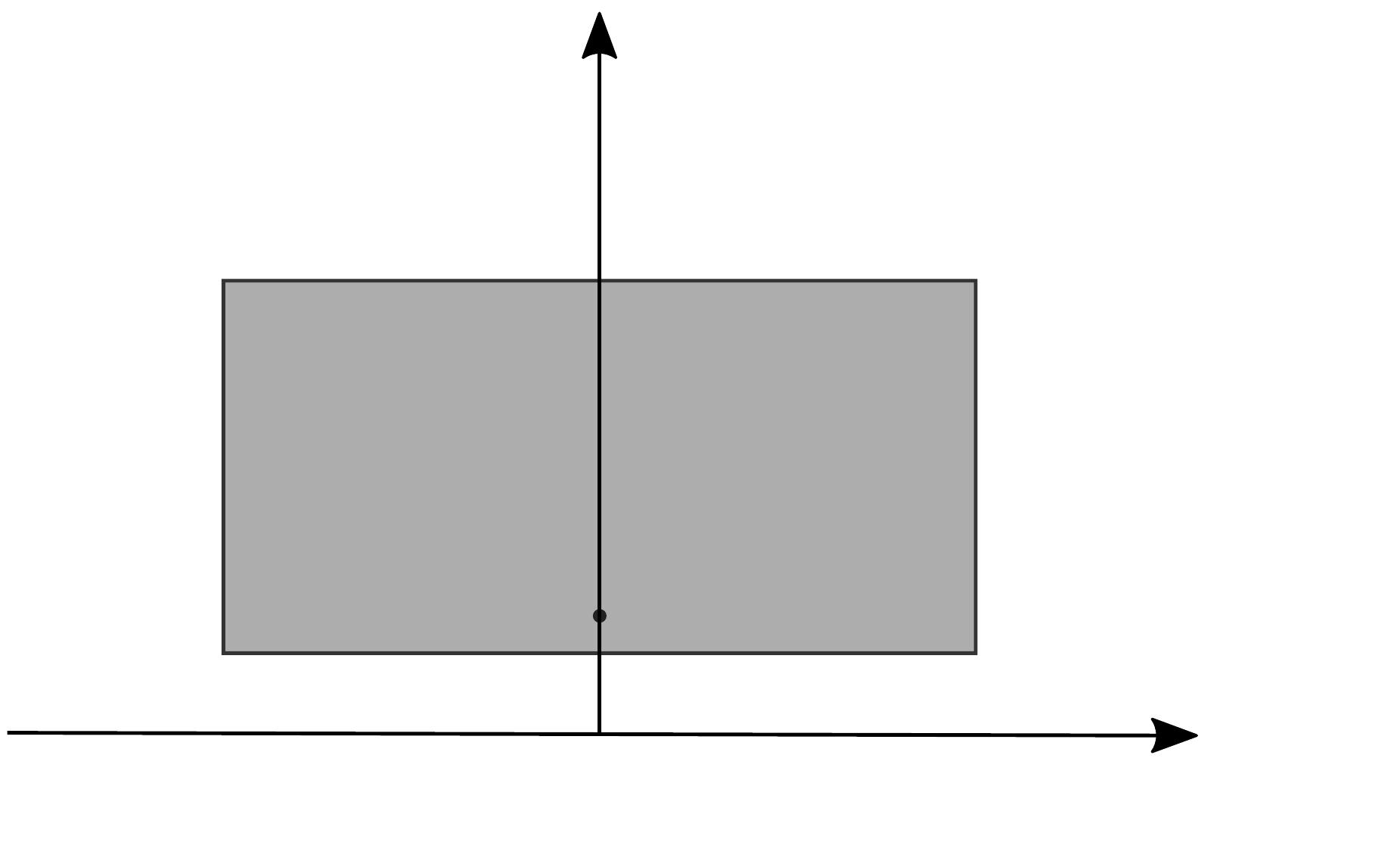%
	\caption{$\Sigma_L, C_L$ in the half-space model}\label{fig:cylinder}
\end{figure}

\begin{theorem}\label{thm:AH_mass_top_face}
	Let $(M^n,g),n\ge 3,$ be an asymptotically hyperbolic manifold with metric falloff rate $q>\frac{n}{2}$. Suppose that $\sigma(L)$ is a positive function of $L$ that tends to infinity as $L\to\infty$. If $q\le n-1$, we additionally assume that as $L\to\infty$
	\begin{equation}\label{eqn:sigma_condition}
		\sigma(L)^{n-2-2q}=\left\{
		\begin{array}{ll}
			o(e^{(q-n+1)L})& \text{ if } q<n-1,\vspace*{3pt}\\
			o(L^{-1})& \text{ if }q=n-1
		\end{array}\right.
	\end{equation}
	Define $\Sigma_L=\{y_1=e^{-L},|\hat{y}|<\sigma(L)\}$ (see Figure \ref{fig:cylinder}). Let $\nu_g$ denote the unit normal vector to $\Sigma_L$ in $(M,g)$ pointing toward $\{y_1=0\}$. Let $H_g$ be the mean curvature of $\Sigma_L$ with respect to $\nu_g$ in $(M,g)$. Then, as $L\to\infty$,
	\begin{equation}\label{eqn:AH_mass_top_face}
		p_0-p_1=2\int_{\Sigma_L} V(H_b-H_g)\, d\sigma_g+o(1)
	\end{equation}
	where $V=t-z_1=\frac{1}{y_1}$ on $M\setminus K$.
\end{theorem}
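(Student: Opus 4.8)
The plan is to realize the mass component $p_0-p_1=H_\Phi(V)$, with $V=t-z_1=\frac1{y_1}$, as a flux that is conserved across the family of parabolic cylinders $C_L$, and then to throw away everything but the top face $\Sigma_L$. I work in the half-space model, where $b=y_1^{-2}(dy_1^2+|d\hat y|^2)$, the outward normal of $C_L$ along $\Sigma_L$ is $\nu_0=-y_1\partial_{y_1}$ (pointing toward $\{y_1=0\}$, as in the statement), and one checks directly that $\mathring\nabla V=V\nu_0$, so that $dV(\nu_0)=V$ and $h(\mathring\nabla V,\nu_0)=V\,h(\nu_0,\nu_0)$. Writing $\mathbb{U}(V,h)=V\,\mathrm{div}_b h-V\,d(\mathrm{tr}_b h)+(\mathrm{tr}_b h)\,dV-h(\mathring\nabla V,\cdot)$ for the integrand of \eqref{eqn:AH mass integral}, the fact that $V\in\mathcal S(\mathbb{H}^n)$ is a static potential gives the adjoint identity $\mathrm{div}_b\,\mathbb{U}(V,h)=V\,DR_b[h]$, where $DR_b$ is the linearized scalar curvature. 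Splitting $R_g+n(n-1)=DR_b[h]+Q(h)$ with $Q$ quadratic in $h$ and its derivatives, condition (2) controls the linear piece (since $V=O(r)$), while $q>\frac n2$ makes $\int V|Q(h)|\,d\mu_b$ converge (here $d\mu_b\sim r^{n-2}\,dr$, so the relevant exponent is $r^{\,n-1-2q}$). Both the coordinate balls $B_r$ and the cylinders $C_L$ exhaust the end $M\setminus K$, so applying the divergence theorem to $\mathbb{U}(V,h)$ on the regions they bound with $\partial K$ identifies both fluxes with the same limit, giving $p_0-p_1=\lim_{L\to\infty}\int_{\partial C_L}\mathbb{U}(V,h)(\nu_{\mathrm{out}})\,d\sigma_b$.

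Next I show that only the top face survives. On the bottom face $\{y_1=e^{L}\}$ one has $V=e^{-L}$ and $r\sim\tfrac12 e^{L}$, and the tiny integrand $|\mathbb{U}|\lesssim V r^{-q}$ easily beats the area $\sim e^{-(n-1)L}\sigma(L)^{n-1}$, so this term vanishes with no constraint on $\sigma$. The side face $\{|\hat y|=\sigma(L)\}$ is the binding one: in horospherical coordinates $r\sim\tfrac12\bigl(y_1+\sigma(L)^2/y_1\bigr)$ there, and inserting $|\mathbb{U}|\lesssim Vr^{-q}$ together with the area element $y_1^{-(n-1)}\sigma(L)^{n-2}\,dy_1\,d\Omega$, the dominant contribution comes from the edge $y_1\approx e^{-L}$ and is of size $\sigma(L)^{\,n-2-2q}\,e^{(n-1-q)L}$ (with an extra factor $L$ when $q=n-1$). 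This is exactly the quantity that \eqref{eqn:sigma_condition} forces to zero, while for $q>n-1$ it decays on its own. Hence $p_0-p_1=\lim_{L\to\infty}\int_{\Sigma_L}\mathbb{U}(V,h)(\nu_0)\,d\sigma_b$.

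Finally I convert the top-face flux into mean curvature. Using $\mathring\nabla V=V\nu_0$, the integrand collapses to $\mathbb{U}(V,h)(\nu_0)=V\bigl[\mathrm{div}_b h(\nu_0)-\partial_{\nu_0}(\mathrm{tr}_b h)+\mathrm{tr}^{\Sigma}h\bigr]$, where $\mathrm{tr}^\Sigma$ is the trace over $\Sigma_L$. Linearizing the mean curvature of the fixed hypersurface $\{y_1=e^{-L}\}$ under $g=b+h$ in Gaussian coordinates yields $2(H_g-H_b)=\partial_{\nu_0}(\mathrm{tr}^{\Sigma}h)-H_b\,h(\nu_0,\nu_0)-2\,\mathrm{div}_\Sigma\!\bigl(h(\nu_0,\cdot)^\top\bigr)+O(|h|^2)$; because horospheres are umbilic with second fundamental form equal to the induced metric, the $\mathrm{tr}^\Sigma h$ and $\langle A,h\rangle$ terms cancel, and the discrepancy between $\mathbb{U}(V,h)(\nu_0)/V$ and $2(H_b-H_g)$ is exactly the tangential divergence $-\mathrm{div}_\Sigma\!\bigl(h(\nu_0,\cdot)^\top\bigr)$. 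Since $V$ is constant on $\Sigma_L$, integrating and replacing $d\sigma_b$ by $d\sigma_g$ (the difference being absorbed into the error) gives
\[
\int_{\Sigma_L}\mathbb{U}(V,h)(\nu_0)\,d\sigma_b=2\int_{\Sigma_L}V(H_b-H_g)\,d\sigma_g-\int_{\partial\Sigma_L}V\,h(\nu_0,\eta)\,d\ell+\int_{\Sigma_L}O\big(V(|h|^2+|h|\,|\mathring\nabla h|)\big),
\]
where $\eta$ is the conormal of $\partial\Sigma_L$. The quadratic remainder is $O(e^{(n-2q)L})=o(1)$ precisely because $q>\frac n2$, and the edge integral is again of size $\sigma(L)^{\,n-2-2q}\,e^{(n-1-q)L}$, killed by the same hypothesis \eqref{eqn:sigma_condition}. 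Combining the three steps proves \eqref{eqn:AH_mass_top_face}.

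The main obstacle is the sharp edge estimate along $\{y_1=e^{-L},\ |\hat y|=\sigma(L)\}$: both the side-face flux of the second step and the boundary term of the third step are governed by the same borderline quantity $\sigma(L)^{\,n-2-2q}e^{(n-1-q)L}$, and it is the careful tracking of $r$, $V$, and the area element in horospherical coordinates near this edge that produces, and requires, exactly the condition \eqref{eqn:sigma_condition} (including the logarithmic refinement at $q=n-1$). By contrast, the linearized mean curvature identity, though computational, is routine once the umbilicity of the horospheres is exploited to turn the remaining first-order terms into a pure tangential divergence.
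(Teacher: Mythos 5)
Your strategy is the same as the paper's: represent $p_0-p_1$ as the flux of $\mathbb{U}(V)$ through $\partial C_L$ (the paper delegates this to Remark \ref{rmk:mass exhaustion}), discard the bottom and lateral faces, and convert the top-face flux into $2\int V(H_b-H_g)\,d\sigma_g$ via the linearized mean curvature identity with the umbilicity cancellation (Proposition \ref{prop:1form} in the paper), the binding error being the top-edge term of size $e^{(n-1-q)L}\sigma(L)^{n-2-2q}$, which is exactly what \eqref{eqn:sigma_condition} kills. Your Steps 1 and 3 are sound, and your direct pointwise bound $|\mathbb{U}(V)|\lesssim V r^{-q}$ on the side and bottom faces is a clean alternative to the paper's device of running the mean-curvature decomposition on every face.

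However, your bottom-face estimate has a genuine gap. You bound the integrand uniformly by $Vr^{-q}\sim e^{-(1+q)L}$ (using $r\sim\tfrac12 e^{L}$) and multiply by the area $\sim e^{-(n-1)L}\sigma(L)^{n-1}$, getting $e^{-(n+q)L}\sigma(L)^{n-1}$, and then assert this vanishes ``with no constraint on $\sigma$.'' It does not: the theorem places \emph{no upper bound} on the growth of $\sigma(L)$ --- for $q>n-1$ there is no condition at all, and for $q\le n-1$ condition \eqref{eqn:sigma_condition} is a lower-bound-type condition (its exponent $n-2-2q$ is negative, so fast-growing $\sigma$ always satisfies it). Thus $\sigma(L)=e^{L^2}$ is admissible, and your bound diverges. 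The missing ingredient is that on the bottom face $r$ grows quadratically in $|\hat y|$: by \eqref{eqn:z-radial}, $r\gtrsim \cosh L+e^{-L}|\hat y|^2/2$, so $|h|\lesssim e^{-qL}\bigl(1+e^{-2L}|\hat y|^2\bigr)^{-q}$ and the transverse integral $\int_0^{\sigma(L)}\bigl(1+e^{-2L}\rho^2\bigr)^{-q}\rho^{n-2}\,d\rho\le e^{(n-1)L}\int_0^\infty(1+u^2)^{-q}u^{n-2}\,du<\infty$ (using $q>\tfrac{n-1}{2}$) converges \emph{independently of} $\sigma(L)$, yielding the $\sigma$-free bound $Ce^{-(1+q)L}$ of \eqref{eqn:estimate_bottom_face}. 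A related soft spot is your lateral-face claim that ``the dominant contribution comes from the edge $y_1\approx e^{-L}$'': the size you state for that dominant piece is correct, but controlling the far region $y_1\ge 1$ uniformly in $\sigma$ requires interpolating between the two lower bounds $r\gtrsim y_1$ and $r\gtrsim \sigma(L)^2/y_1$ (e.g.\ writing $r^{-q}\le r^{-(q-n/2)}\,r^{-n/2}$, as in \eqref{eqn:estimate_lateral_surface_3}); neither bound alone gives a $\sigma$-independent $o(1)$. Both defects are localized and fixable by exactly these refinements, but as written the proof only covers $\sigma$ of restricted growth, which is strictly weaker than the theorem (and insufficient for the paper's application, where $\sigma(L)=e^{nL/2}$ is used to prove Theorem \ref{thm:AH_mass_horospheres}).
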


A few remarks relevant to this result are in order:

\begin{remark}
	The idea of using parabolic cylinders is motivated by the work in \cite{Miao.2019}, which provides the geometric mass formula for asymptotically flat metrics using large coordinate cubes. In a recent development, Bray et al. \cite{Bray.2019} presented a new proof of the Riemannian positive mass theorem for asymptotically flat $3$-metrics that gives an explicit lower bound for the mass in terms of linear growth harmonic functions and scalar curvature. Their approach was to apply Stern's integral formula \cite{Stern.2019} on level sets of a harmonic function. In this context, the mass formula using coordinate cubes in \cite{Miao.2019} can be viewed as to compute the mass by using the level sets of coordinate functions, which are linear growth harmonic functions on Euclidean space. This harmonic level set technique has been generalized for $3$-dimensional asymptotically flat initial data sets by Hirsch et al \cite{Hirsch.2020}. In particular, they defined spacetime harmonic functions on a given intial data set $(M^3,g,k)$ as a solution of the following equation
	\[
		\Delta u+(\mathrm{Tr}_g k)|\nabla u|=0,
	\] and established a generalized integral formula for such functions. From this point of view, the formula \eqref{eqn:AH_mass_horospheres} uses the level sets of a spacetime harmonic function $y_1^{-1}$ on hyperbolic space as an initial data set $(\mathbb{H}^3,b,-b)$.
\end{remark}

\begin{remark}
	It was also pointed out in \cite{Miao.2019} and \cite{Li.2019} that the cubic mass formula for asymptotically flat manifolds has connection with Gromov's scalar curvature comparison theory for cubic Riemannian polyhedra. In particular, Li \cite[Section 5]{Li.2019} observed that the so-called dihedral rigidity phenomenon for a Euclidean polyhedron $P$ is a localization of the positive mass theorem for asymptotically flat manifolds. Moreover, Li \cite{Li.2020} extended the dihedral rigidity for a collection of parabolic polyhedrons enclosed by horospheres in hyperbolic spaces. The use of parabolic cylinders in our proof is relatable to this, see Remark \ref{rmk:why_not_cubic} for a relevant discussion. 
\end{remark}

\begin{remark}\label{rmk:mass region}
	Theorem \ref{thm:AH_mass_top_face} implies that the quantity $p_0-p_1$ is determined from the region 
	\[
		\bigcup_{L\ge L_0} \Sigma_{L}=\{(y_1,\hat{y})\in\mathbb{H}^n:0<y_1<e^{-L_0}, |\hat{y}|<f(y_1)\},
	\]
	for some large $L_0>0$. Here, $f(y_1)$ is any positive function of $y_1$ such that $f(y_1)\to\infty$ as $y_1\to 0$ and $f(y_1)^{n-2-2q}=o(y_1^{n-1-q})$ if $q<n-1$ or $f(y_1)^{-n}=o(e^{-1/L})$ if $q=n-1$. The latter condition is needed only if $\frac{n}{2}<q\le n-1$, so the shaded region in Figure \ref{fig:mass region} represents an example of $\bigcup_{L\ge L_0} \Sigma_{L}$ in such case. On the other hand, if $q> n-1$, such region can be arbitrarily thin as $|\hat{y}|\to \infty$ since we do not need any additional assumption on $\sigma(L)$ for Theorem \ref{thm:AH_mass_top_face}.
	\begin{figure}[ht]
		\centering
		\fontsize{11pt}{11pt}\selectfont
		\def\svgwidth{3in}%
\begingroup%
  \makeatletter%
  \providecommand\color[2][]{%
    \errmessage{(Inkscape) Color is used for the text in Inkscape, but the package 'color.sty' is not loaded}%
    \renewcommand\color[2][]{}%
  }%
  \providecommand\transparent[1]{%
    \errmessage{(Inkscape) Transparency is used (non-zero) for the text in Inkscape, but the package 'transparent.sty' is not loaded}%
    \renewcommand\transparent[1]{}%
  }%
  \providecommand\rotatebox[2]{#2}%
  \newcommand*\fsize{\dimexpr\f@size pt\relax}%
  \newcommand*\lineheight[1]{\fontsize{\fsize}{#1\fsize}\selectfont}%
  \ifx\svgwidth\undefined%
    \setlength{\unitlength}{606.99782965bp}%
    \ifx\svgscale\undefined%
      \relax%
    \else%
      \setlength{\unitlength}{\unitlength * \real{\svgscale}}%
    \fi%
  \else%
    \setlength{\unitlength}{\svgwidth}%
  \fi%
  \global\let\svgwidth\undefined%
  \global\let\svgscale\undefined%
  \makeatother%
  \begin{picture}(1,0.51675022)%
    \lineheight{1}%
    \setlength\tabcolsep{0pt}%
    \put(0,0){\includegraphics[width=\unitlength,page=1]{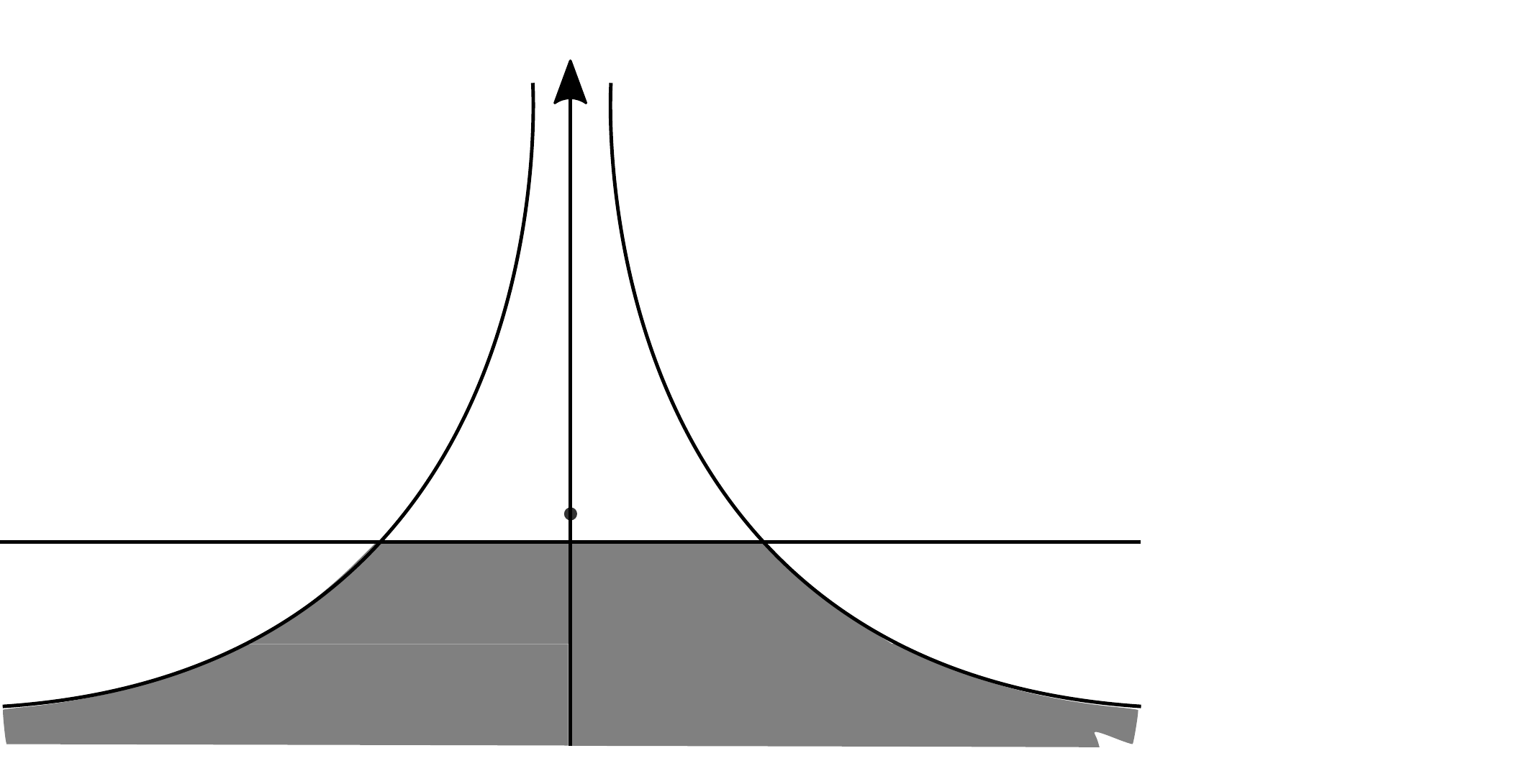}}%
    \put(0.75017915,0.15419299){\color[rgb]{0,0,0}\makebox(0,0)[lt]{\lineheight{1.25}\smash{\begin{tabular}[t]{l}$y_1=e^{-L_0}$\end{tabular}}}}%
    \put(0.37258702,0.49139489){\color[rgb]{0,0,0}\makebox(0,0)[lt]{\lineheight{1.25}\smash{\begin{tabular}[t]{l}$y_1$\end{tabular}}}}%
    \put(0.75194427,0.00662844){\color[rgb]{0,0,0}\makebox(0,0)[lt]{\lineheight{1.25}\smash{\begin{tabular}[t]{l}$|\hat{y}|$\end{tabular}}}}%
    \put(0,0){\includegraphics[width=\unitlength,page=2]{fig5.pdf}}%
    \put(0.4356991,0.29957497){\makebox(0,0)[lt]{\lineheight{1.25}\smash{\begin{tabular}[t]{l}$|\hat{y}|=f(y_1)$\end{tabular}}}}%
    \put(0.61607553,0.088515){\makebox(0,0)[lt]{\lineheight{1.25}\smash{\begin{tabular}[t]{l}$\Sigma_L$\end{tabular}}}}%
    \put(0,0){\includegraphics[width=\unitlength,page=3]{fig5.pdf}}%
  \end{picture}%
\endgroup%
		\caption{The region in Remark \ref{rmk:mass region}}\label{fig:mass region}
	\end{figure}
\end{remark}

Using Theorem \ref{thm:AH_mass_top_face} and the mass rigidity of asymptotically hyperbolic manifolds in \cite{Huang.2020}, we obtain the following rigidity result, which is stronger than Corollary \ref{cor:rigidity_horosphere}.

\begin{corollary}\label{cor:rigidity_general}
	Let $(M^n,g),n\ge 3,$ be an asymptotically hyperbolic manifold with the scalar curvature lower bound $R_g\ge -n(n-1)$. Define $\Sigma_L=\{y_1=e^{-L}, |\hat{y}|<\sigma(L)\}$ where $\sigma(L)$ satisfies \eqref{eqn:sigma_condition}. Suppose that there exists $L_0>0$ such that $H_g(\Sigma_L)\ge n-1$ for all $L\ge L_0$. Then $(M^n,g)$ is isometric to hyperbolic space $(\mathbb{H}^n,b)$. In particular, if $(M,g)$ is isometric to hyperbolic space in the region $\bigcup_{L>L_0}\Sigma_L$, then $(M,g)$ is isometric to hyperbolic space.
\end{corollary}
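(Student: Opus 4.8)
\emph{Proof plan.} The strategy is to extract the sign of the right-hand side of \eqref{eqn:AH_mass_top_face} from the mean curvature hypothesis, feed it into the positive mass theorem, and close with the mass rigidity of \cite{Huang.2020}. First I would record the sign of the integrand. On $\Sigma_L$ the static potential $V=1/y_1=e^L$ is strictly positive, and the assumption $H_g(\Sigma_L)\ge n-1=H_b$ gives $H_b-H_g\le 0$ everywhere on $\Sigma_L$. Hence $V(H_b-H_g)\le 0$ pointwise, so
\[
	I(L):=2\int_{\Sigma_L}V(H_b-H_g)\,d\sigma_g\le 0\qquad\text{for every }L\ge L_0.
\]
Since $\sigma(L)$ obeys \eqref{eqn:sigma_condition}, Theorem \ref{thm:AH_mass_top_face} applies and yields $p_0-p_1=I(L)+o(1)$ as $L\to\infty$. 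As the left-hand side is independent of $L$, this identity in fact shows $I(L)\to p_0-p_1$; since each $I(L)$ is nonpositive, the limit is nonpositive and therefore $p_0-p_1\le 0$.

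Next I would invoke the positive mass theorem for asymptotically hyperbolic manifolds. Because $R_g\ge -n(n-1)$, the mass vector is future-directed causal, so that $p_0\ge |p|:=\bigl(\sum_{i=1}^n p_i^2\bigr)^{1/2}$. Combining this with the bound from the first step produces the chain
\[
	p_1\le |p_1|\le |p|\le p_0\le p_1,
\]
which forces equality throughout. In particular $p_0=|p|$, i.e. $m(g)^2=p_0^2-\sum_{i=1}^n p_i^2=0$, so the mass vector is null. The rigidity case of the positive mass theorem established in \cite{Huang.2020} then gives that $(M,g)$ is isometric to $(\mathbb{H}^n,b)$, which is the assertion.

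The second statement is the equality case of the first. If $g$ coincides with $b$ on the open region $\bigcup_{L>L_0}\Sigma_L$, then $g=b$ in a two-sided neighborhood of each hypersurface $\Sigma_L$, whence $H_g(\Sigma_L)=H_b=n-1$ for all $L>L_0$; in particular $H_g(\Sigma_L)\ge n-1$, so the hypothesis of the first part holds (after enlarging $L_0$ if necessary) and the conclusion follows.

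The substantive input is external to this deduction: it is supplied by Theorem \ref{thm:AH_mass_top_face} and by the sharp positive mass theorem with rigidity of \cite{Huang.2020}. I expect the only delicate point internal to the argument to be the limit passage: because Theorem \ref{thm:AH_mass_top_face} controls $p_0-p_1$ only up to an $L$-dependent $o(1)$ error, one must use that $p_0-p_1$ is a fixed number to transfer the sign of $I(L)$ to its limit, rather than attempting to conclude at a single large value of $L$ where the error term is not quantitatively controlled. A secondary point worth stating carefully is that I use the full future-directed causal conclusion of the positive mass theorem (so that $p_0\ge|p|$), not merely $p_0^2\ge\sum_i p_i^2$, since the sign information $p_0\le p_1$ alone would otherwise be insufficient to rule out a strictly timelike mass vector with $p_0<0$.
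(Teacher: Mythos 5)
Your proposal is correct and follows essentially the same route as the paper's proof: the sign of the integrand from $H_g\ge n-1$ combined with Theorem \ref{thm:AH_mass_top_face} gives $p_0-p_1\le 0$, the chain of inequalities $p_1\ge p_0\ge\bigl(\sum_{i=1}^n p_i^2\bigr)^{1/2}\ge |p_1|$ from the positive mass theorem forces the null-mass equality case, and the mass rigidity of \cite{Huang.2020} concludes, with the second statement reduced to $H_g(\Sigma_L)=n-1$ exactly as in the paper. Your explicit remark that one needs the future-directed causal form $p_0\ge\bigl(\sum_{i=1}^n p_i^2\bigr)^{1/2}$, rather than merely $p_0^2\ge\sum_{i=1}^n p_i^2$, is a point the paper uses only implicitly.
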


\begin{proof}
	By the assumption and Theorem \ref{thm:AH_mass_top_face}, we have
	\[
		p_0-p_1=\lim_{L\to\infty}2\int_{\Sigma_L} e^{L}(n-1-H_g)\, d\sigma_g\le 0.
	\]
	Combining the positivity of the mass (see \cites{Wang.X:2001,Chrusciel-Herzlich:2003,Chrusciel.2019}), we have 
	\[
		p_1\ge p_0\ge \left(\sum_{i=1}^n p_i^2\right)^{1/2}\ge |p_1|,
	\]which forces
	\[
		p_0=|p_1|=\left(\sum_{i=1}^n p_i^2\right)^{1/2}.
	\]
	Hence by the mass rigidity result from \cite{Huang.2020}, we obtain the first statement. The second statement follows from the fact that $H_g(\Sigma_L)=n-1$ for all $L\ge L_0$ if $(M,g)$ is isometric to hyperbolic space in the region $\bigcup_{L\ge L_0}\Sigma_L$.
\end{proof}

We also give a sufficient condition for a region that does not contribute to the mass quantity.

\begin{proposition}\label{prop:nonlocalizable}
	 Let $(M^n,g),n\ge 3,$ be an asymptotically hyperbolic manifold with metric falloff rate $q>\frac{n}{2}$. Let $\Sigma_L=\{y_1=e^{-L},|\hat{y}|\le\sigma(L)\}$ where $\sigma(L)$ satisfies \eqref{eqn:sigma_condition}. For a given subset $U$ of $M$ and large $L>0$, define $\Theta(U,L)=e^{-L(n-1)}|U\cap\Sigma_L|_b$. Suppose 
	 \begin{equation}\label{eqn:theta}
	 	\Theta(U,L)=o(e^{L(q-n)}) \text{ as } L\to\infty
	 \end{equation}
	 Then, as $L\to\infty$,
	 \begin{equation}\label{eqn:size_infinity}
	 	p_0-p_1=2\int_{\Sigma_L\setminus U}V(H_b-H_g)\, d\sigma_g+o(1).
	 \end{equation}
	 In particular, if $q=n$, the above holds if 
	 \[
	 	\Theta(U):=\limsup_{L\to\infty}\Theta(U,L)=0.
	 \]
\end{proposition}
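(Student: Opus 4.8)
The plan is to deduce \eqref{eqn:size_infinity} from the refined formula \eqref{eqn:AH_mass_top_face} of Theorem \ref{thm:AH_mass_top_face} by showing that the portion of the boundary integral carried by $U$ is negligible. Splitting $\int_{\Sigma_L}=\int_{\Sigma_L\setminus U}+\int_{\Sigma_L\cap U}$ in \eqref{eqn:AH_mass_top_face}, it is enough to prove that
\[
	\int_{\Sigma_L\cap U}V(H_b-H_g)\,d\sigma_g=o(1)\quad\text{as }L\to\infty,
\]
since then \eqref{eqn:size_infinity} follows immediately by subtraction.

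The first step is a uniform pointwise estimate of the integrand on $\Sigma_L$. There $V=e^{L}$, and since $g=b+h$ with $|h|_b+|\mathring\nabla h|_b=O(r^{-q})$, the mean curvature of the fixed hypersurface $\Sigma_L$ depends on the metric only to first order, so that $|H_b-H_g|=O(r^{-q})$ and $d\sigma_g=(1+O(r^{-q}))\,d\sigma_b$ (these are exactly the estimates underlying Theorem \ref{thm:AH_mass_top_face}). The geometric input I would use is that every point of $\Sigma_L=\{y_1=e^{-L}\}$ satisfies $r\ge c\,e^{L}$ for a fixed $c>0$: in the coordinate relation $t=(1+y_1^2+|\hat y|^2)/(2y_1)$ the minimum over $\Sigma_L$ is attained at $\hat y=0$, giving $t=\tfrac{1}{2}e^{L}(1+e^{-2L})$ and hence $r=\sqrt{t^2-1}\ge c\,e^{L}$. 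Therefore $r^{-q}\le Ce^{-qL}$ uniformly on $\Sigma_L$, and
\[
	|V(H_b-H_g)|\le C\,e^{L}r^{-q}\le C'\,e^{(1-q)L}\quad\text{on }\Sigma_L.
\]

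With this bound and $|U\cap\Sigma_L|_b=\Theta(U,L)\,e^{(n-1)L}$, I would estimate
\[
	\Big|\int_{\Sigma_L\cap U}V(H_b-H_g)\,d\sigma_g\Big|\le C'\,e^{(1-q)L}\,|U\cap\Sigma_L|_b=C'\,e^{(n-q)L}\,\Theta(U,L),
\]
and the hypothesis \eqref{eqn:theta}, namely $\Theta(U,L)=o(e^{(q-n)L})$, makes the right-hand side $o(1)$, which proves the claim and hence \eqref{eqn:size_infinity}. The final assertion is the case $q=n$, where $e^{(q-n)L}=1$ so that \eqref{eqn:theta} reads $\Theta(U,L)=o(1)$; since $\Theta(U,L)\ge 0$, this is equivalent to $\Theta(U)=\limsup_{L\to\infty}\Theta(U,L)=0$.

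I expect the only delicate point to be the uniform pointwise bound, specifically the choice to use the crude estimate $r\ge c\,e^{L}$ rather than the sharper $r\approx\tfrac{1}{2}e^{L}(1+|\hat y|^2)$. Because $U$ is an arbitrary subset, $U\cap\Sigma_L$ may concentrate near $\hat y=0$, where $r$ attains its minimum and the integrand is largest, so no better uniform bound is available there, and the exponent $e^{(q-n)L}$ in \eqref{eqn:theta} is precisely calibrated to this worst case. This also explains why a genuine size condition is required rather than mere decay of the integrand on $U$: for $\tfrac{n}{2}<q<n$ the same pointwise bound gives only $\int_{\Sigma_L}|V(H_b-H_g)|\,d\sigma_b=O(e^{(n-q)L})$, which grows in $L$, so the weighted measure of the removed region must be controlled directly by a condition of the form \eqref{eqn:theta}.
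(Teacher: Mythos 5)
Your proposal is correct and follows essentially the same route as the paper: the paper likewise splits off $\int_{U\cap\Sigma_L}V(H_b-H_g)\,d\sigma_g$, bounds the integrand pointwise by $C e^{L}\bigl(\tfrac{2}{e^L+e^{-L}+e^L|\hat{x}|^2}\bigr)^{q}\le C' e^{(1-q)L}$ (which is exactly your worst-case estimate $r\ge c\,e^{L}$ at $\hat y=0$, via \eqref{eqn:z-radial}), multiplies by $|U\cap\Sigma_L|_b=e^{(n-1)L}\Theta(U,L)$, and invokes Theorem \ref{thm:AH_mass_top_face}. Your treatment is in fact slightly more explicit than the paper's (the uniform lower bound on $r$, the $d\sigma_g$ versus $d\sigma_b$ comparison, and the $q=n$ case are all spelled out), but the underlying argument is identical.
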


\begin{proof}
	By direct computation, we have
	\begin{equation}
		\begin{aligned}
			\int_{U\cap\Sigma_L} V(H_b-H_g)\, d\sigma_g &\le C_1\int_{U\cap\Sigma_L} e^L\left(\frac{2}{e^L+e^{-L}+e^L|\hat{x}|^2}\right)^{q}\, d\sigma_b\\
			&\le C_2 e^{L(1-q)}|U\cap\Sigma_L|_b\\
			&\le \tilde{C}e^{L(n-q)}o(e^{L(q-n)}).
		\end{aligned}
	\end{equation}
	Hence, the proposition follows by Theorem \ref{thm:AH_mass_top_face}.
\end{proof}

Combining the equality case of the positive mass theorem, we deduce that a nontrivial asymptotically hyperbolic metric (with $R_g\ge -n(n-1)$) cannot be localized in a region $U$ satisfying \eqref{eqn:theta}. One can interpret $\Theta(U)$ as \emph{the asymptotic size of a subset} $U$ at infinity. Note that a related concept was considered for the asymptotically flat setting by Carlotto and Schoen (see \cite[Section 2.2]{Carlotto:2016dh}).

For a conformally compact asymptotically hyperbolic manifold, the value of $\Theta(U)$ is equivalent to the measure of $\partial_\infty U$, where $\partial_\infty U$ is the boundary of $U$ intersecting with the conformal infinity. Since the mass of such manifolds can be defined as the integral on the conformal infinity (see \cite{Wang.X:2001}), it is natural to expect that any region $U$ with $\Theta(U)=0$ does not affect the total mass.

We also remark that an asymptotically hyperbolic metric is localizable in a subset $U$ with $\Theta(U)>0$. In other words, for a region $U$ with $\Theta(U)>0$ in $\mathbb{H}^n$, there exists an asymptotically hyperbolic metric $(\mathbb{H}^n,g)$ with $R_g\ge -n(n-1)$ such that $g$ is isometric to $b$ outside $U$ and $R_g>-n(n-1)$ somewhere in $U$, proved by Chru\'sciel and Delay \cite{Chrusciel:2018aa}.


The rest of the paper is organized as follows: in section \ref{sec:preliminaries}, we analyze the mass integrand and discover its relation to the mean curvature difference, see \eqref{eqn:1form_formula}. Also, we verify Theorem \ref{thm:AH_mass_horospheres} for AdS Schwarzschild metrics as an example. In section \ref{sec:proof}, we prove the main theorems. 

If applied to other hypersurfaces, the formula \eqref{eqn:1form_formula} will yield other mass formula.
While we do not use them to prove the main results, we include some of those formulas in an appendix. 

%

\section{Preliminaries}\label{sec:preliminaries}
We recall various coordinates of hyperbolic space $(\mathbb{H}^n,b)$.

\vspace{.1cm}

\begin{enumerate}
	\item Hyperbolic space can be obtained as the upper sheet of the hyperboloid in the Minkowski space:
	\[
		\mathbb{H}^n=\{(z,t)\in\mathbb{R}^{n,1}:z_1^2+\cdots+z_n^2-t^2=-1, t>0\}.
	\]
	The set of functions $\{z_1,\ldots,z_n\}$ restricted to $\mathbb{H}^n$ are often called the hyperboloidal coordinates. Let $r=\sqrt{z_1^2+\cdots+z_n^2}$, then the metric $b$ is written as
	\[
		b=\frac{dr^2}{1+r^2}+r^2g_{\mathbb{S}^{n-1}}.
	\]
	On $\mathbb{H}^n$, it satisfies that $t=\sqrt{1+r^2}$, so we denote a function $t=\sqrt{1+r^2}$ throughout this paper.
	\item Consider the upper half space model $\mathbb{R}^n_+=\{y\in\mathbb{R}^n:y_1>0\}$. Then the metric $b$ is
	\[
		b=\frac{1}{y_1^2}(dy_1^2+\cdots +dy_n^2).
	\]
	\item Define $x_1=-\ln y_1,x_2=y_2,\ldots, x_n=y_n$. Then $\{x_1,\ldots,x_n\}$ becomes another set of coordinates of $\mathbb{H}^n$, and the metric $b$ is 
	\[
		b=dx_1^2+e^{2x_1}(dx_2^2+\cdots +dx_n^2).
	\]
\end{enumerate}
These coordinates are related by 
\begin{equation}\label{eqn:xz-transform}
y_1=\frac{1}{t-z_1},\quad y_i=\frac{z_i}{t-z_1}, \ 
	e^{x_1}=t-z_1,\quad x_i=\frac{z_i}{e^{x_1}},  \ i=2,\ldots,n.
\end{equation}
Using these transforms, we can write a horosphere $\mathcal{H}_L$ as
\[
	\mathcal{H}_L=\{x_1=L\}=\{y_1=e^{-L}\}=\{t-z_1=e^L\}.
\]
For simplicity, we adopt the notation $\hat{x}=(x_2,\ldots,x_n)$. The same applies to $\hat{y}$ and $\hat{z}$.
In section \ref{sec:proof}, we use the coordinates (3) to get essential estimates.

\subsection{The mass $1$-form and the mean curvature difference}

It is well-known that the integrands of the mass integral \eqref{eqn:AH mass integral} can be viewed as the following $1$-form:
\begin{equation}\label{eqn:mass_1_form}
	\mathbb{U}(V)=V\,\mathrm{div }_b h-V\,d(\mathrm{tr}_{b} h)+(\mathrm{tr}_{b} h)d V-h(\mathring{\nabla} V,\cdot).
\end{equation}
Using this one form and the linearity of the mass integral $H_\Phi$, we can write
\[
	p_0-p_1=\lim_{r\to\infty}\int_{S_r}\mathbb{U}(t-z_1)(\nu_0)\, d\sigma_b.
\]

To derive the mass formula in terms of geometric quantity, we use the following proposition showing another expression of the $1$-form $\mathbb{U}$. Note that the following proposition holds for the $1$-form $\mathbb{U}$ with an arbitrary background metric $g_0$ replacing $b$.

\begin{proposition}\label{prop:1form}
	Let $\Sigma$ be any hypersurface in $M$ and $g_0$, $g$ be Riemannian metrics on $M$. Let $\nu_0,\nu_g$ denote the unit normal vectors to $\Sigma$ pointing the same side in $(M,g_0),(M,g)$, respectively. Then for $|h|_{g_0}$ sufficiently small (where $h=g-g_0$), we have
	\begin{equation}\label{eqn:1form_formula}
		\begin{aligned}
			\mathbb{U}(V)(\nu_0)=V[2(H_{g_0}-H_g)+&|A_{g_0}|_{g_0}O(|h|_{g_0}^2)+O(|\mathring{\nabla}h|_{g_0}|h|_{g_0})]\\
			&+(\mathrm{tr}_{g_0}^\Sigma h)dV(\nu_0)-V\langle A_{g_0},h\rangle_{g_0|\Sigma}-\mathrm{div}_\Sigma (VX).
		\end{aligned}
	\end{equation}
	where $H_{g_0}, H_g$ are the mean curvature of $\Sigma$ with respect to $\nu_0, \nu_g$ in $(M,g_0), (M,g)$, respectively, $A_{g_0}$ is the second fundamental form of $\Sigma$ with respect to $\nu_0$ in $(M,g_0)$, and $X$ is the vector field on $\Sigma$ that is dual to the $1$-form $h(\nu_0,\cdot)$ with respect to $g_0|_\Sigma$.
\end{proposition}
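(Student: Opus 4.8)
The plan is to evaluate the one-form $\mathbb{U}(V)$ on $\nu_0$ term by term, separating the two contributions that involve no derivative of $h$ from the two that do. Fix a point $p\in\Sigma$ and a $g_0$-orthonormal frame $\{e_1,\dots,e_{n-1},\nu_0\}$ with the $e_\alpha$ tangent to $\Sigma$. For the algebraic terms I would decompose $\mathring{\nabla}V=\nabla^\Sigma V+dV(\nu_0)\nu_0$ into tangential and normal parts and use the definition of $X$, namely $g_0(X,W)=h(\nu_0,W)$ for $W\in T\Sigma$, to rewrite $h(\mathring{\nabla}V,\nu_0)=dV(X)+h(\nu_0,\nu_0)dV(\nu_0)$. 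Writing also $\mathrm{tr}_{g_0}h=\mathrm{tr}_{g_0}^\Sigma h+h(\nu_0,\nu_0)$, the two $h(\nu_0,\nu_0)dV(\nu_0)$ contributions cancel exactly, and I obtain the exact reduction
\[
\mathbb{U}(V)(\nu_0)=V\big[(\mathrm{div}_{g_0}h)(\nu_0)-d(\mathrm{tr}_{g_0}h)(\nu_0)\big]+(\mathrm{tr}_{g_0}^\Sigma h)\,dV(\nu_0)-dV(X).
\]

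It then remains to identify the bracket. Expanding the divergence and the trace-derivative in the orthonormal frame, the purely normal-normal pieces $(\mathring{\nabla}_{\nu_0}h)(\nu_0,\nu_0)$ cancel, leaving
\[
(\mathrm{div}_{g_0}h)(\nu_0)-d(\mathrm{tr}_{g_0}h)(\nu_0)=\sum_{\alpha}\big[(\mathring{\nabla}_{e_\alpha}h)(e_\alpha,\nu_0)-(\mathring{\nabla}_{\nu_0}h)(e_\alpha,e_\alpha)\big].
\]
The core of the proof is to match this right-hand side with $2(H_{g_0}-H_g)-\langle A_{g_0},h\rangle_{g_0|\Sigma}-\mathrm{div}_\Sigma X$ up to the stated quadratic errors. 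Once this is done, substituting back and using the Leibniz identity $\mathrm{div}_\Sigma(VX)=V\,\mathrm{div}_\Sigma X+dV(X)$ converts $-V\,\mathrm{div}_\Sigma X-dV(X)$ into $-\mathrm{div}_\Sigma(VX)$, which produces exactly \eqref{eqn:1form_formula}.

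To establish this matching I would argue pointwise at $p$ in Fermi coordinates for $g_0$ adapted to $\Sigma$, chosen so that $\Sigma=\{x^n=0\}$, $\nu_0=\partial_n$, $(g_0)_{nn}\equiv 1$, $(g_0)_{n\alpha}\equiv 0$, and additionally $(g_0)_{ij}(p)=\delta_{ij}$ with the tangential first derivatives of $g_0$ vanishing at $p$; in such coordinates the only surviving background derivative is $\partial_n(g_0)_{\alpha\beta}(p)=2(A_{g_0})_{\alpha\beta}$. On one hand, a direct Christoffel computation expands the frame expression above into $\sum_\alpha\partial_\alpha h_{\alpha n}-\partial_n\sum_\alpha h_{\alpha\alpha}+H_{g_0}h_{nn}+\langle A_{g_0},h\rangle$. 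On the other hand, writing $\nu_g$ as the normalized $g$-gradient of $x^n$, I would expand its components to first order, finding normal coefficient $1-\tfrac12 h_{nn}$ and tangential part $-X+O(|h|^2)$, and then expand $H_g=\mathrm{div}_g\nu_g=\tfrac12 W^k g^{ij}\partial_k g_{ij}+\partial_k W^k$. Collecting first-order terms yields
\[
H_g-H_{g_0}=-\tfrac12 h_{nn}H_{g_0}-\langle A_{g_0},h\rangle+\tfrac12\sum_\alpha\partial_n h_{\alpha\alpha}-\mathrm{div}_\Sigma X,
\]
where the tangential divergence arises precisely from $\partial_k W^k$, since $\mathrm{div}_\Sigma X|_p=\sum_\alpha\partial_\alpha h_{\alpha n}$. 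Comparing the two expansions gives the core identity; because every quantity involved is geometric, the pointwise identity at an arbitrary $p$ yields the claim globally.

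I expect the main obstacle to be the careful linearization of $H_g$ and, in particular, the bookkeeping that produces the exact remainder structure $|A_{g_0}|_{g_0}O(|h|_{g_0}^2)+O(|\mathring{\nabla}h|_{g_0}|h|_{g_0})$. The observation that makes this clean is that $H_g$ is a first-order quantity in the metric, so every term of its Taylor expansion in $h$ carries exactly one derivative: either a derivative of $g_0$, which at $p$ equals $2A_{g_0}$, or a derivative of $h$. Consequently a second-order term is either a degree-two-in-$h$ coefficient times a background derivative, giving $|A_{g_0}|O(|h|^2)$, or a degree-one coefficient times $\mathring{\nabla}h$, giving $O(|\mathring{\nabla}h||h|)$; the smallness of $|h|_{g_0}$ guarantees convergence of the expansions of $g^{ij}$, $(g^{nn})^{-1/2}$, and $\sqrt{\det g}$. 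Getting all signs consistent—fixing once and for all $H_g=\mathrm{div}_g\nu_g$ and $A_{g_0}(e_\alpha,e_\beta)=g_0(\nabla^{g_0}_{e_\alpha}\nu_0,e_\beta)$, matched against the normalization $H_b=n-1$ for horospheres—is the other point demanding care.
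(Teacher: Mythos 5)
Your proof is correct, and the intermediate identities you state all check out under your stated conventions ($H=\mathrm{div}\,\nu$, $A_{g_0}(e_\alpha,e_\beta)=g_0(\mathring{\nabla}_{e_\alpha}\nu_0,e_\beta)$): the exact cancellation of the two $h(\nu_0,\nu_0)\,dV(\nu_0)$ contributions, the frame identity for $(\mathrm{div}_{g_0}h)(\nu_0)-d(\mathrm{tr}_{g_0}h)(\nu_0)$, the Christoffel expansion $\sum_\alpha\partial_\alpha h_{\alpha n}-\partial_n\sum_\alpha h_{\alpha\alpha}+H_{g_0}h_{nn}+\langle A_{g_0},h\rangle$, and the Fermi-coordinate linearization of $H_g$ (including the sign of $\mathrm{div}_\Sigma X$, which is the easiest place to slip). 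However, your route is genuinely more self-contained than the paper's. The paper's proof is a citation plus algebra: it quotes \cite[Proposition 2.1]{Miao.2021}, which already supplies the expansion
\[
\mathbb{U}(V)(\nu_0)=V\bigl[2(H_{g_0}-H_g)-\mathrm{div}_\Sigma X-\langle A_{g_0},h\rangle_{g_0|_\Sigma}+|A_{g_0}|_{g_0}O(|h|_{g_0}^2)+O(|\mathring{\nabla}h|_{g_0}|h|_{g_0})\bigr]+(\mathrm{tr}_{g_0} h)\,dV(\nu_0)-h(\mathring{\nabla}V,\nu_0),
\]
and then performs only the rearrangement that appears in your first and second paragraphs: splitting $\mathrm{tr}_{g_0}h$ and $h(\mathring{\nabla}V,\nu_0)$ into tangential and normal parts, cancelling, and applying $\mathrm{div}_\Sigma(VX)=V\,\mathrm{div}_\Sigma X+dV(X)$. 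Your Fermi-coordinate computation of $H_g-H_{g_0}$ is precisely a proof of that cited input, so the two arguments share the same algebraic skeleton but differ in where the analytic core comes from. What your approach buys is self-containedness and transparency: it makes visible why the quadratic remainder carries the coefficient $|A_{g_0}|_{g_0}$ (each term of the expansion carries exactly one derivative, either of $g_0$, worth $2A_{g_0}$ at the base point, or of $h$), which is exactly the structure the later estimates on the faces, edges, and lateral surface of $C_L$ exploit. What the paper's approach buys is brevity, at the cost of outsourcing the substantive step to prior work. Either constitutes a complete proof of the proposition.
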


\begin{proof}
	By \cite[Proposition 2.1]{Miao.2021}, we have
	\[
		\begin{aligned}
			\mathbb{U}(V)(\nu_0)&=V[2(H_{g_0}-H_g)-\mathrm{div}_\Sigma X-\langle A_{g_0},h\rangle_{g_0|_\Sigma}+|A_{g_0}|_{g_0}O(|h|_{g_0}^2)\\
			&\hspace{1.5in}+O(|\mathring{\nabla}h|_{g_0}|h|_{g_0})]+(\mathrm{tr}_{g_0} h)dV(\nu_0)-h(\mathring{\nabla}V,\nu_0).
		\end{aligned}
	\] Then we get
	\[
		\begin{aligned}
			\mathbb{U}(V)(\nu_0)&=V[2(H_{g_0}-H_g)-\mathrm{div}_\Sigma X-\langle A_{g_0},h\rangle_{g_0|_\Sigma}+|A_{g_0}|_{g_0}O(|h|_{g_0}^2)+O(|\mathring{\nabla}h|_{g_0}|h|_{g_0})]\\
			&\hspace{0.5in}+((\mathrm{tr}_{g_0}^\Sigma h)dV(\nu_0)+h(\nu_0,\nu_0)dV(\nu_0))-(h(\mathring{\nabla}^\Sigma V,\nu_0)+h(dV(\nu_0)\nu_0,\nu_0))\\
			&=V[2(H_{g_0}-H_g)-\langle A_{g_0},h\rangle_{g_0|_\Sigma}+|A_{g_0}|_{g_0}O(|h|_{g_0}^2)+O(|\mathring{\nabla}h|_{g_0}|h|_{g_0})]\\
			&\hspace{0.5in}+(\mathrm{tr}_{g_0}^\Sigma h)dV(\nu_0)-(\mathrm{div}_\Sigma X+h(\mathring{\nabla}^\Sigma V,\nu_0))\\
			&=V[2(H_{g_0}-H_g)+|A_{g_0}|_{g_0}O(|h|_{g_0}^2)+O(|\mathring{\nabla}h|_{g_0}|h|_{g_0})]\\
			&\hspace{0.5in}+(\mathrm{tr}_{g_0}^\Sigma h)dV(\nu_0)-V\langle A_{g_0},h\rangle_{g_0|\Sigma}-\mathrm{div}_\Sigma (VX).
		\end{aligned}
	\]
\end{proof}

\begin{remark}\label{rmk:mass exhaustion}
	The mass integrals can be computed with a suitable exhaustion of $M$ consisting of bounded domains to which the divergence theorem is applicable. See \cite[Proposition 4.1]{Bartnik:1986dq} and \cite[Section 2]{Michel:2011jz}.
\end{remark}

\subsection{Example case: Anti-de Sitter Schwarzschild manifolds}

Here, we compute the mass of the anti-de Sitter (AdS) Schwarzschild manifolds by using the formula \eqref{eqn:AH_mass_horospheres}. For a given $m>0$, we call a Riemannian manifold $[r_m,\infty)\times S^{n-1}$ equipped with the metric
\[
	g_m=\frac{dr^2}{1+r^2-\frac{2m}{r^{n-2}}}+r^2 g_{\mathbb{S}^{n-1}}
\]
the AdS Schwarzschild manifold where $r_m$ is the largest zero of $r^n+r^{n-2}-2m$. By setting $z_1=r\cos\theta$ for $\theta\in (0,\pi)$, we can write the metric as
\[
	g_m=\frac{dr^2}{1+r^2-\frac{2m}{r^{n-2}}}+r^2(d\theta^2+(\sin\theta)^2 g_{\mathbb{S}^{n-2}}).
\]
Let $\xi=-\frac{2m}{r^{n-2}}$ and $V=t-z_1=\sqrt{1+r^2}-r\cos\theta$. By direct computation, we have 
\begin{equation}
	\begin{aligned}
		\nabla V&=\mathring{\nabla}V+\xi V_r\partial_r,\\
		|\nabla V|_g^2&=|\mathring{\nabla}V|_b^2+\xi (V_r)^2,\\
		\Delta_g V&=\Delta_b V-V_r\left(\frac{n\xi}{2r}\right)+\xi V_{rr},\\
		\nabla_\nu\nabla_\nu V&=\mathring{\nabla}_{\nu_0} \mathring{\nabla}_{\nu_0} V+(2-n)\frac{r(V_r)^3 \xi}{2V^2}+O(r^{-(n+1)}),
	\end{aligned}
\end{equation}
where $\nabla, \Delta_g, \nu$ are the gradient, Laplacian, normal vector to the level set $\{V=e^L\}$ pointing the direction $\nabla V$ with respect to the metric $g$, while $\mathring{\nabla}, \Delta_b, \nu_0$ are the corresponding ones with respect to the hyperbolic metric $b$. Also, the subscript $r$ on $V$ means the partial differentiation with respect to $r$.

We observe that on $\{V=e^L\}$
$		e^{L}=\sqrt{1+r^2}-r\cos\theta\le \sqrt{1+r^2}+r $,
thus it follows that $r^{-1}=O(e^{-L})$ when $L$ is large.

By using these, we compute the mean curvature of the level set $\{V=e^L\}$ as $L$ approaches infinity:
\begin{equation}
	\begin{aligned}
		H_g&=\frac{\Delta_g V-\nabla_\nu\nabla_\nu V}{|\nabla V|_g}\\
		&=\left(\Delta_b V-\mathring{\nabla}_{\nu_0}\mathring{\nabla}_{\nu_0}V+V_r\left(\frac{n\xi}{2r}\right)-\xi V_{rr}-(2-n)\frac{r(V_r)^3 \xi }{V^2}+O(e^{-(n+1)L})\right)\\
		&\hspace{1.7in}\times\frac{1}{|\mathring{\nabla} V|_b}\left(1-\frac{\xi (V_r)^2}{2|\mathring{\nabla} V|_b^2}+O(e^{-(2n)L})\right),\\
		&=H_b-H_b\frac{\xi (V_r)^2}{2V^2}+\frac{1}{V}\left(V_r\left(\frac{n\xi }{2r}\right)-(2-n)\frac{r(V_r)^3 \xi }{2V^2}\right)+O(e^{-(n+2)L}).
	\end{aligned}
\end{equation}
Here, we used the fact that $V=|\mathring{\nabla}V|_b=\mathring{\nabla}_{\nu_0}\mathring{\nabla}_{\nu_0} V$ and $V_{rr}=\frac{1}{(\sqrt{1+r^2})^3}=O(r^{-3})$. 

Therefore, we have
\begin{equation}\label{eqn:AdS mean curv difference}
	\begin{aligned}
		2V(H_b-H_g)&=(n-1)\frac{\xi (V_r)^2}{V}-V_r\left(\frac{n\xi }{r}\right)+(2-n)\frac{r(V_r)^3 \xi }{V}+O(e^{-(n+1)L})\\
		&=\frac{\xi  V_r}{r}\left(1-n+O(e^{-L})+O(e^{-2L})\right)\\
		&=\frac{2m}{r^{n-1}}\left(n-1-(n-1)\cos\theta+O(e^{-L})\right)\\
		&=\frac{2m}{r^{n-1}}\left(n-1-(n-1)\frac{z_1}{r}+O(e^{-L})\right).
	\end{aligned}
\end{equation}

Hence, we obtain
\begin{equation}
	\begin{aligned}
		&\int_{\mathcal{H}_L}2V(H_b-H_g)\, d\mu_g\\
		&\qquad=\int_{\mathcal{H}_L}\frac{2m}{r^{n-1}}\left(n-1-(n-1)\frac{z_1}{r}+O(e^{-L})\right)\, d\mu_b+o(1)\\
		&\qquad=\int_{\mathbb{S}^{n-2}}\int_0^\infty\frac{2m}{r^{n-1}}\left(n-1-(n-1)\frac{z_1}{r}\right) e^{L(n-1)}\rho^{n-2}\, d\rho d\sigma_{\mathbb{S}^{n-2}}+o(1)\\
		&\qquad=2m(n-1)\omega_{n-1}+o(1)
	\end{aligned}
\end{equation}
as $L$ approaches infinity. In the second equality, we used the spherical coordinates on $\mathcal{H}_L$ so that
\[
	b|_{\mathcal{H}_L}=e^{2L}(d\rho^2+\rho^2 g|_{\mathbb{S}^{n-2}}).
\]
	
\begin{remark}
	From \eqref{eqn:AdS mean curv difference}, the difference $H_b-H_g$ is positive for sufficiently large $r>R$. Indeed, by using
	\[
		\cos\theta=\frac{\sqrt{1+r^2}-e^L}{r}\quad \text{on }\mathcal{H}_L,
	\]
	we have for large $L$
	\[
		\begin{aligned}
			1-\cos\theta &=\frac{r-\sqrt{1+r^2}+e^L}{r}=\frac{1}{r}\left(e^L-\frac{1}{\sqrt{1+r^2}+r}\right)> 0.
		\end{aligned}
	\]
	This implies that for the AdS Schwarzschild manifold, the mean curvature on coordinate horospheres $\mathcal{H}_L$ for large $L$ is less than $n-1$.
\end{remark}

\section{Hyperbolic mass via horospheres}\label{sec:proof}

In this section, we derive the mass formula in terms of geometric quantities on coordinate horospheres. Assume $(M^n,g), n\ge 3$, is asymptotically hyperbolic. By using the $\{x_i\}$ coordinates defined as (3) in section \ref{sec:preliminaries}, the parabolic cylinder $C_L$ is defined as
\begin{equation}
	C_L=\{(x_1,\hat{x})\in \mathbb{H}^{n}:|x_1|\le L, |\hat{x}|\le \sigma(L)\}
\end{equation}
where $\sigma(L)$ increases to infinity as $L\to\infty$, which will be determined later. Define the surrounding surfaces of $C_L$ and their boundaries as the following:
\[
	\begin{aligned}
		F_{\pm,L}&=\{(x_1,\hat{x})\in\mathbb{H}^{n}:x_1=\pm L,|\hat{x}|\le \sigma(L)\},\\
		S_L&=\{(x_1,\hat{x})\in\mathbb{H}^{n}:|x_1|\le L,|\hat{x}|=\sigma(L)\}\\
		E_{\pm,L}&=\{(x_1,\hat{x})\in\mathbb{H}^{n}:x_1=\pm L,|\hat{x}|=\sigma(L)\}
	\end{aligned}
\]
See Figure \ref{fig:prism} to compare the shapes of $C_L$ in various coordinate charts. For convenience, all estimates throughout this section are performed in the $\{x_i\}$-coordinates.

\begin{figure}
	\centering
	\begin{subfigure}[b]{0.4\textwidth}
		\centering
		\fontsize{11pt}{11pt}\selectfont
		\def\svgwidth{2.5in}
\begingroup%
  \makeatletter%
  \providecommand\color[2][]{%
    \errmessage{(Inkscape) Color is used for the text in Inkscape, but the package 'color.sty' is not loaded}%
    \renewcommand\color[2][]{}%
  }%
  \providecommand\transparent[1]{%
    \errmessage{(Inkscape) Transparency is used (non-zero) for the text in Inkscape, but the package 'transparent.sty' is not loaded}%
    \renewcommand\transparent[1]{}%
  }%
  \providecommand\rotatebox[2]{#2}%
  \newcommand*\fsize{\dimexpr\f@size pt\relax}%
  \newcommand*\lineheight[1]{\fontsize{\fsize}{#1\fsize}\selectfont}%
  \ifx\svgwidth\undefined%
    \setlength{\unitlength}{557.02841931bp}%
    \ifx\svgscale\undefined%
      \relax%
    \else%
      \setlength{\unitlength}{\unitlength * \real{\svgscale}}%
    \fi%
  \else%
    \setlength{\unitlength}{\svgwidth}%
  \fi%
  \global\let\svgwidth\undefined%
  \global\let\svgscale\undefined%
  \makeatother%
  \begin{picture}(1,0.66208092)%
    \lineheight{1}%
    \setlength\tabcolsep{0pt}%
    \put(0,0){\includegraphics[width=\unitlength,page=1]{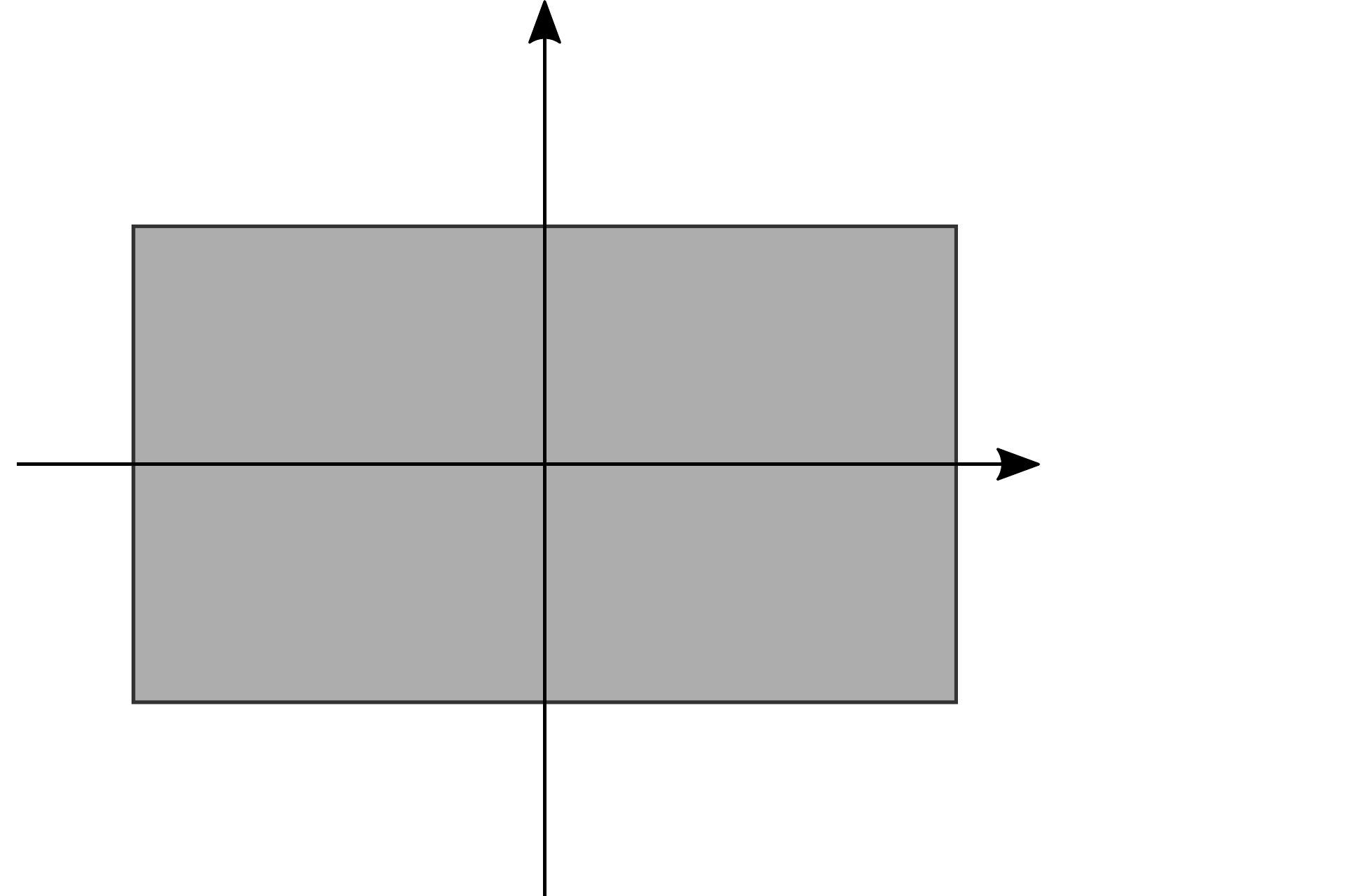}}%
    \put(0.43325424,0.62751854){\color[rgb]{0,0,0}\makebox(0,0)[lt]{\lineheight{1.25}\smash{\begin{tabular}[t]{l}$x_1$\end{tabular}}}}%
    \put(0.77713758,0.2901297){\color[rgb]{0,0,0}\makebox(0,0)[lt]{\lineheight{1.25}\smash{\begin{tabular}[t]{l}$|\hat{x}|$\end{tabular}}}}%
    \put(0.42157855,0.4484991){\color[rgb]{0,0,0}\makebox(0,0)[lt]{\lineheight{1.25}\smash{\begin{tabular}[t]{l}$L$\end{tabular}}}}%
    \put(0.41201242,0.09398807){\color[rgb]{0,0,0}\makebox(0,0)[lt]{\lineheight{1.25}\smash{\begin{tabular}[t]{l}$-L$\end{tabular}}}}%
    \put(0.63482981,0.25969004){\color[rgb]{0,0,0}\makebox(0,0)[lt]{\lineheight{1.25}\smash{\begin{tabular}[t]{l}$\sigma(L)$\end{tabular}}}}%
    \put(-0.00271741,0.25733682){\color[rgb]{0,0,0}\makebox(0,0)[lt]{\lineheight{1.25}\smash{\begin{tabular}[t]{l}$-\sigma(L)$\end{tabular}}}}%
  \end{picture}%
\endgroup%

		\caption{In $\{x_i\}$-coordinates}
	\end{subfigure}
	\qquad
	\begin{subfigure}[b]{0.4\textwidth}
		\centering
		\fontsize{11pt}{11pt}\selectfont
		\def\svgwidth{2.5in}
		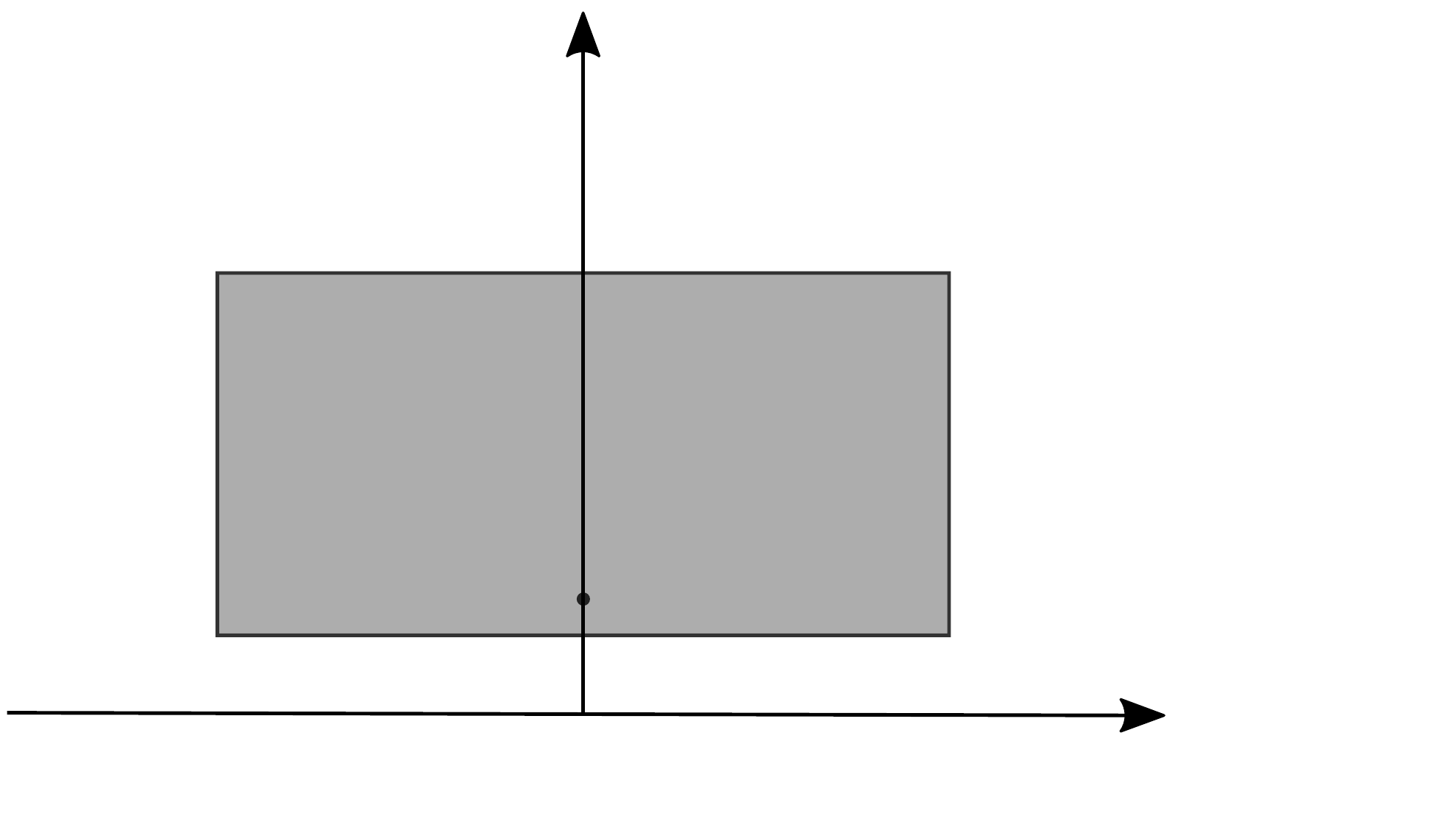
		\caption{Upper half space model}
	\end{subfigure}
	\hfill\\
	\hfill\\
	\begin{subfigure}[b]{0.4\textwidth}
		\centering
		\fontsize{11pt}{11pt}\selectfont
		\def\svgwidth{2.5in}
\begingroup%
  \makeatletter%
  \providecommand\color[2][]{%
    \errmessage{(Inkscape) Color is used for the text in Inkscape, but the package 'color.sty' is not loaded}%
    \renewcommand\color[2][]{}%
  }%
  \providecommand\transparent[1]{%
    \errmessage{(Inkscape) Transparency is used (non-zero) for the text in Inkscape, but the package 'transparent.sty' is not loaded}%
    \renewcommand\transparent[1]{}%
  }%
  \providecommand\rotatebox[2]{#2}%
  \newcommand*\fsize{\dimexpr\f@size pt\relax}%
  \newcommand*\lineheight[1]{\fontsize{\fsize}{#1\fsize}\selectfont}%
  \ifx\svgwidth\undefined%
    \setlength{\unitlength}{646.61669153bp}%
    \ifx\svgscale\undefined%
      \relax%
    \else%
      \setlength{\unitlength}{\unitlength * \real{\svgscale}}%
    \fi%
  \else%
    \setlength{\unitlength}{\svgwidth}%
  \fi%
  \global\let\svgwidth\undefined%
  \global\let\svgscale\undefined%
  \makeatother%
  \begin{picture}(1,0.60612595)%
    \lineheight{1}%
    \setlength\tabcolsep{0pt}%
    \put(0,0){\includegraphics[width=\unitlength,page=1]{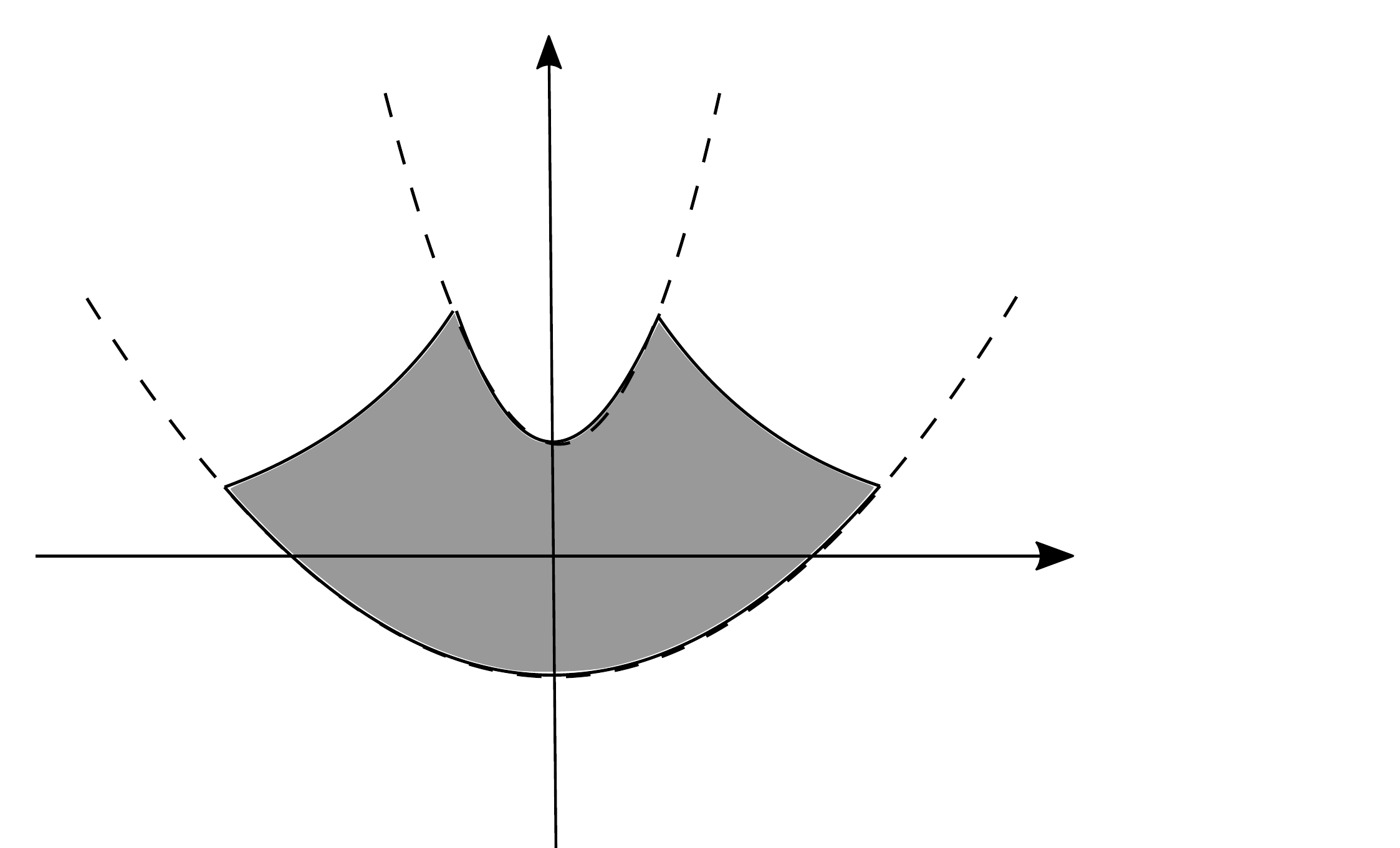}}%
    \put(0.40932399,0.57647806){\color[rgb]{0,0,0}\makebox(0,0)[lt]{\lineheight{1.25}\smash{\begin{tabular}[t]{l}$z_1$\end{tabular}}}}%
    \put(0.76597728,0.17100157){\color[rgb]{0,0,0}\makebox(0,0)[lt]{\lineheight{1.25}\smash{\begin{tabular}[t]{l}$|\hat{z}|$\end{tabular}}}}%
    \put(0.52250721,0.52308367){\color[rgb]{0,0,0}\makebox(0,0)[lt]{\lineheight{1.25}\smash{\begin{tabular}[t]{l}$x_1=-L$\end{tabular}}}}%
    \put(0.73668255,0.36656623){\color[rgb]{0,0,0}\makebox(0,0)[lt]{\lineheight{1.25}\smash{\begin{tabular}[t]{l}$x_1=L$\end{tabular}}}}%
    \put(0.40398961,0.07324859){\color[rgb]{0,0,0}\makebox(0,0)[lt]{\lineheight{1.25}\smash{\begin{tabular}[t]{l}$\small\sinh(-L)$\end{tabular}}}}%
    \put(0.40133027,0.2517796){\color[rgb]{0,0,0}\makebox(0,0)[lt]{\lineheight{1.25}\smash{\begin{tabular}[t]{l}$\small\sinh L$\end{tabular}}}}%
    \put(0,0){\includegraphics[width=\unitlength,page=2]{fig4.pdf}}%
  \end{picture}%
\endgroup%

		\caption{Hyperboloidal model}
	\end{subfigure}
	\qquad
	\begin{subfigure}[b]{0.4\textwidth}
		\centering
		\fontsize{11pt}{11pt}\selectfont
		\def\svgwidth{2.3in}
\begingroup%
  \makeatletter%
  \providecommand\color[2][]{%
    \errmessage{(Inkscape) Color is used for the text in Inkscape, but the package 'color.sty' is not loaded}%
    \renewcommand\color[2][]{}%
  }%
  \providecommand\transparent[1]{%
    \errmessage{(Inkscape) Transparency is used (non-zero) for the text in Inkscape, but the package 'transparent.sty' is not loaded}%
    \renewcommand\transparent[1]{}%
  }%
  \providecommand\rotatebox[2]{#2}%
  \newcommand*\fsize{\dimexpr\f@size pt\relax}%
  \newcommand*\lineheight[1]{\fontsize{\fsize}{#1\fsize}\selectfont}%
  \ifx\svgwidth\undefined%
    \setlength{\unitlength}{401.93589578bp}%
    \ifx\svgscale\undefined%
      \relax%
    \else%
      \setlength{\unitlength}{\unitlength * \real{\svgscale}}%
    \fi%
  \else%
    \setlength{\unitlength}{\svgwidth}%
  \fi%
  \global\let\svgwidth\undefined%
  \global\let\svgscale\undefined%
  \makeatother%
  \begin{picture}(1,0.77247734)%
    \lineheight{1}%
    \setlength\tabcolsep{0pt}%
    \put(0,0){\includegraphics[width=\unitlength,page=1]{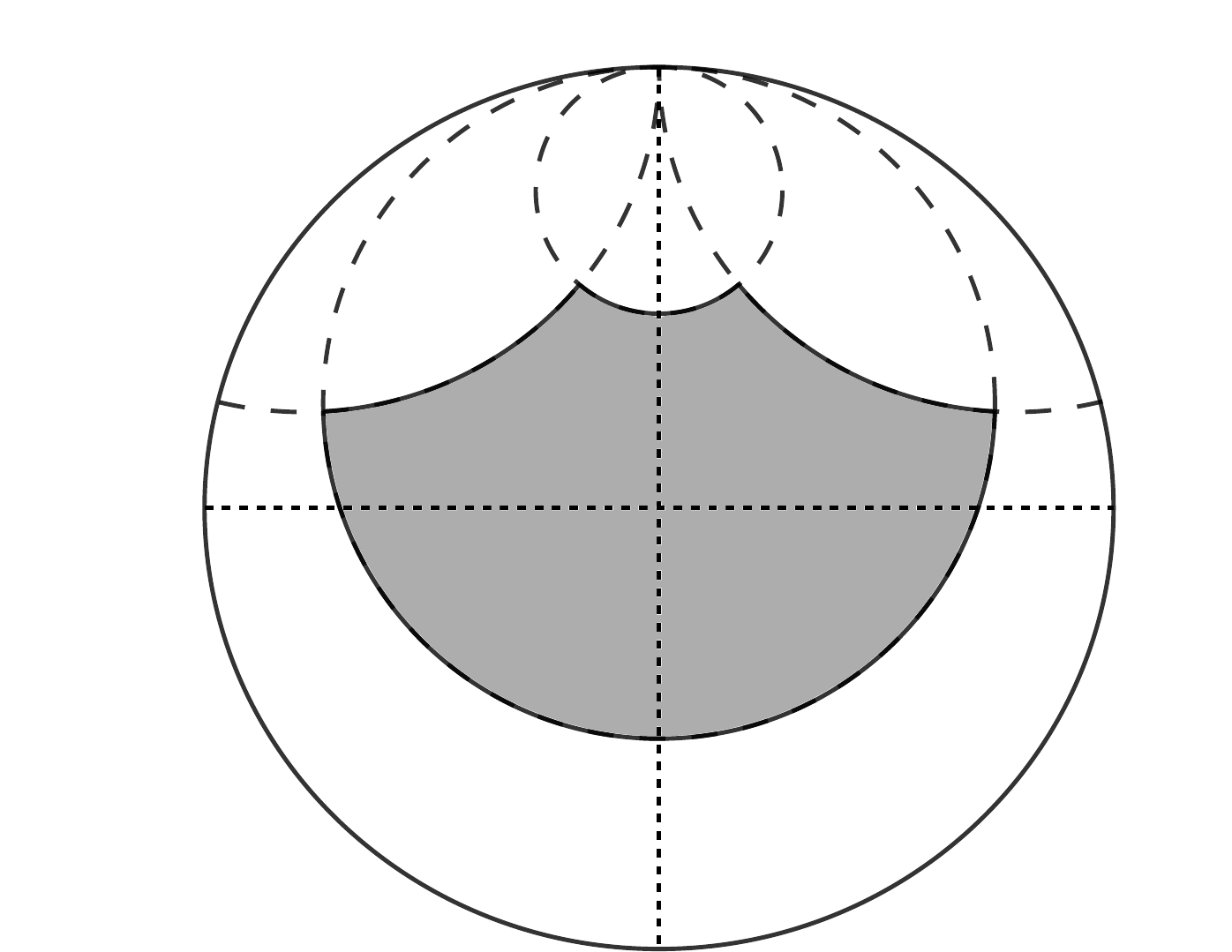}}%
    \put(0.78984457,0.71346554){\color[rgb]{0,0,0}\makebox(0,0)[lt]{\lineheight{1.25}\smash{\begin{tabular}[t]{l}$x_1=-L$\end{tabular}}}}%
    \put(0,0){\includegraphics[width=\unitlength,page=2]{fig1.pdf}}%
    \put(0.0147003,0.72466238){\color[rgb]{0,0,0}\makebox(0,0)[lt]{\lineheight{1.25}\smash{\begin{tabular}[t]{l}$x_1=L$\end{tabular}}}}%
    \put(0,0){\includegraphics[width=\unitlength,page=3]{fig1.pdf}}%
  \end{picture}%
\endgroup%

		\caption{Poincar\'e Ball model}
	\end{subfigure}
	\caption{$C_L$ in various coordinate charts}
	\label{fig:prism}
\end{figure}

We first prove Theorem \ref{thm:AH_mass_top_face} by using the following formula (see Remark \ref{rmk:mass exhaustion})
\[
	p_0-p_1=\lim_{L\to\infty}\int_{\partial C_L}\mathbb{U}(t-z_1)(\nu_0)\, d\sigma_b.
\]

Let $V=t-z_1=e^{x_1}$. We write the one form $\mathbb{U}$ on the surrounding surfaces using Lemma \ref{prop:1form}.

\begin{lemma}
	In the above setting, the following hold:
	\begin{equation}\label{eqn:flux1}
		\begin{aligned}
			\int_{F_{\pm,L}}\mathbb{U}(V)(\nu_0)\, d\sigma_b&=\int_{F_{\pm,L}}2V(H_b-H_g)\, d\sigma_g\\
			&\qquad +\int_{F_{\pm,L}} VO(|h|_b^2)\, d\sigma_b+\int_{E_{\pm,L}}VO(|h|_b)\, ds_b,
		\end{aligned}
	\end{equation}
	and
	\begin{equation}\label{eqn:flux2}
		\begin{aligned}
			\int_{S_{L}}\mathbb{U}(V)(\nu_0)\, d\sigma_b&=\int_{S_{L}}2V(H_b-H_g)\, d\sigma_g\\
			&\qquad +\int_{S_{L}} \left[VO(|h|_b^2)+\sigma(L)^{-1}O(|h|_b)\right]\, d\sigma_b+\int_{E_{\pm,L}}VO(|h|_b)\, ds_b,
		\end{aligned}
	\end{equation}
	as $L\to\infty$.
\end{lemma}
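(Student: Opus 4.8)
The plan is to apply the pointwise identity \eqref{eqn:1form_formula} of Proposition \ref{prop:1form} with $g_0=b$ on each surrounding surface, integrate, and then dispose of the three algebraic terms on the right of \eqref{eqn:1form_formula} using the explicit geometry of horospheres and coordinate cylinders in the $\{x_i\}$-coordinates, together with the divergence theorem on each surface. A recurring bookkeeping point is that for an asymptotically hyperbolic metric both $|h|_b$ and $|\mathring{\nabla}h|_b$ decay like $O(r^{-q})$, so the two error types in \eqref{eqn:1form_formula}, namely $|A_b|_b\,O(|h|_b^2)$ and $O(|\mathring{\nabla}h|_b|h|_b)$, have the same asymptotic order once $|A_b|_b$ is bounded; on a horosphere $|A_b|_b\equiv\sqrt{n-1}$, so on the faces both are genuinely $O(|h|_b^2)$ and I will record them as such.

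First I would treat the faces $F_{\pm,L}=\{x_1=\pm L\}$. Here the $b$-unit normal is $\nu_0=\pm\partial_{x_1}$ (since $b_{11}=1$), so that $dV(\nu_0)=\partial_{x_1}(e^{x_1})\cdot(\pm1)=\pm V$. A direct Christoffel computation in $b=dx_1^2+e^{2x_1}|d\hat{x}|^2$ shows the horosphere is totally umbilic with $A_b=\pm(b|_{F_{\pm,L}})$ as a $2$-tensor on the face, whence $\langle A_b,h\rangle_{b|\Sigma}=\pm\,\mathrm{tr}_b^\Sigma h$. Substituting these into \eqref{eqn:1form_formula}, the two middle terms cancel identically for either orientation:
\[
	(\mathrm{tr}_b^\Sigma h)\,dV(\nu_0)-V\langle A_b,h\rangle_{b|\Sigma}=\pm V\,\mathrm{tr}_b^\Sigma h\mp V\,\mathrm{tr}_b^\Sigma h=0,
\]
leaving $\mathbb{U}(V)(\nu_0)=V[2(H_b-H_g)+O(|h|_b^2)]-\mathrm{div}_\Sigma(VX)$. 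Integrating over $F_{\pm,L}$, the divergence theorem converts the last term into $\int_{E_{\pm,L}}VX(\mu)\,ds_b$ with $\mu$ the outward conormal; since $X$ is $b$-dual to $h(\nu_0,\cdot)$ we have $|X|_b=O(|h|_b)$, giving the boundary error $\int_{E_{\pm,L}}VO(|h|_b)\,ds_b$. Finally, replacing $d\sigma_b$ by $d\sigma_g=(1+O(|h|_b))\,d\sigma_b$ in the main term introduces the error $\int V(H_b-H_g)O(|h|_b)\,d\sigma_b=\int VO(|h|_b^2)\,d\sigma_b$, using $H_b-H_g=O(|h|_b)+O(|\mathring{\nabla}h|_b)$ with both factors $O(r^{-q})$. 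Collecting terms yields \eqref{eqn:flux1}.

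For the lateral surface $S_L=\{|\hat{x}|=\sigma(L)\}$ the simplifications are different. Its $b$-unit normal $\nu_0$ lies in the span of $\partial_{x_2},\dots,\partial_{x_n}$, hence is $b$-orthogonal to $\partial_{x_1}$; since $V=e^{x_1}$ depends on $x_1$ alone, $dV(\nu_0)=0$ and the term $(\mathrm{tr}_b^\Sigma h)\,dV(\nu_0)$ vanishes outright. The term $-V\langle A_b,h\rangle_{b|\Sigma}$ is now kept and estimated. Passing to the half-space model, $S_L$ is the Euclidean cylinder $\{|\hat{y}|=\sigma(L)\}$, and a conformal-change computation (the normal being $b$-orthogonal to $\partial_{y_1}$, so $\partial_{\nu_0}(\ln y_1)=0$) gives principal curvatures $y_1/\sigma(L)$ with multiplicity $n-2$ and $0$ once; thus $|A_b|_b=\sqrt{n-2}\,y_1/\sigma(L)$, and since $V=y_1^{-1}$ we obtain $V|A_b|_b=O(\sigma(L)^{-1})$. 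Hence $|V\langle A_b,h\rangle_{b|\Sigma}|\le V|A_b|_b|h|_b=\sigma(L)^{-1}O(|h|_b)$, which is precisely the extra error in \eqref{eqn:flux2}. The remaining steps—the divergence theorem for $\mathrm{div}_\Sigma(VX)$ over the cylinder, whose boundary is $E_{+,L}\cup E_{-,L}$, and the measure conversion $d\sigma_b\to d\sigma_g$ on the main term—are identical to the face case and produce the stated errors.

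The main obstacle, and the computational heart of the argument, is establishing the two geometric facts about the background second fundamental form $A_b$: the umbilicity $A_b=\pm(b|_\Sigma)$ on the faces, which drives the \emph{exact} cancellation of the trace terms and is what leaves a clean $2V(H_b-H_g)$ integrand there, and the decay estimate $V|A_b|_b=O(\sigma(L)^{-1})$ on the lateral cylinder, which is what renders the $\langle A_b,h\rangle$ contribution small and ultimately allows the side face to be discarded so that only the top face survives in Theorem \ref{thm:AH_mass_top_face}. Everything else is accounting of error orders through the asymptotic decay of $h$.
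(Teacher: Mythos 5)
Your proposal is correct and follows essentially the same route as the paper: apply Proposition \ref{prop:1form} with $g_0=b$ on each surrounding surface, exploit that the faces $F_{\pm,L}$ are umbilic level sets of $V$ with $dV(\nu_0)=\pm V$ and $\langle A_b,h\rangle_{b|\Sigma}=\pm\,\mathrm{tr}_b^\Sigma h$ (so the trace terms cancel exactly), use $dV(\nu_0)=0$ and $V|A_b|_b=O(\sigma(L)^{-1})$ on the lateral cylinder, and convert $\mathrm{div}_\Sigma(VX)$ into the edge terms by the divergence theorem. The only differences are cosmetic — you compute $A_b$ on $S_L$ via a conformal change in the half-space model where the paper computes Christoffel symbols directly in the $\{x_i\}$-coordinates, and you make explicit the $d\sigma_b\to d\sigma_g$ conversion that the paper leaves implicit.
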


\begin{proof}

By direct computation, we have
\[
	V=|\mathring{\nabla}V|_b.
\]
Since $F_{\pm,L}\subset\{V=e^{\pm L}\}$ are level sets of $V$ from \eqref{eqn:xz-transform}, it follows that
\[
	(\mathrm{tr}_{b}^{F_{\pm,L}} h)dV(\nu_0)-V\langle A_{b},h\rangle_{b|_{F_{\pm,L}}}=\pm(\mathrm{tr}_{b}^{F_{\pm,L}} h)(V-V)=0.
\]
By integrating it, we get
\[
	\begin{aligned}
		&\int_{F_{\pm,L}}\mathbb{U}(V)(\nu_0)\, d\sigma_b \\
		&=\int_{F_{\pm,L}}2V(H_{b}-H_g)\, d\sigma_g+\int_{F_{\pm,L}}VO(|h|^2_b)\, d\sigma_b+\int_{E_{\pm,L}} VO(|h|_b)\,ds_b,
	\end{aligned}
\]
which proves \eqref{eqn:flux1}.

Now we consider the lateral surface $S_L$. By writing the metric
\[
	b=dx_1^2+e^{2x_1}\sum_{i=2}^{n}dx_i^2=dx_1^2+e^{2x_1}(d\rho^2+\rho^2 g_{\mathbb{S}^{n-2}}),
\]
we have
\[
	\Gamma_{1\rho}^{1}=\Gamma_{1\rho}^{\alpha}=\Gamma_{\alpha\rho}^1=0,\Gamma_{\alpha\rho}^\beta=\frac{1}{2}b^{\beta\tau}(b_{\alpha\tau,\rho})=\rho^{-1}\delta_\alpha^\beta,
\]
where the indices $\alpha, \beta$ represent local orthonormal coordinates on $(\mathbb{S}^{n-2},g_{\mathbb{S}^{n-2}})$. Thus, the second fundamental form of $S_L$ in hyperbolic space is
\[
	A_{11}=A_{1\alpha}=A_{\alpha 1}=0, A_{\alpha\beta}=e^{-x_1}\Gamma_{\alpha \rho}^{\tau}b_{\tau\beta}=e^{x_1}\sigma(L)\delta_{\alpha\beta}.
\]
It follows that
\[
	V\langle A_b,h\rangle_{b|_{S_L}}=e^{x_1}(b^{\alpha\beta}b^{\tau\sigma} A_{\alpha\tau}h_{\beta\sigma})\le (n-2)\sigma(L)^{-1}|h|_b,
\]
so by using Proposition \ref{prop:1form}, we get on $S_L$
\[
	\begin{aligned}
		\mathbb{U}(V)(\nu_0)&=V[2(H_{b}-H_g)+|A_{b}|_{b}O(|h|_{b}^2)+O(|\mathring{\nabla}h|_{b}|h|_{b})]\\
		&\hspace{0.9in}+(\mathrm{tr}_{b}^{S_L} h)dV(\nu_0)-V\langle A_{b},h\rangle_{b|_{S_L}}-\mathrm{div}_{S_L} (VX)\\
		&=V[2(H_{b}-H_g)+O(|h|^2_b)]+\sigma(L)^{-1}O(|h|_b)-\mathrm{div}_{S_L}(VX).
	\end{aligned}
\]
Hence, we have
\[
	\begin{aligned}
		&\int_{S_{L}}\mathbb{U}(V)(\nu_0)\, d\sigma_b \\
		&=\int_{S_{L}}2V(H_{b}-H_g)\, d\sigma_g+\int_{S_{L}}\left[VO(|h|_b^2)+\sigma(L)^{-1}O(|h|_b)\right]\, d\sigma_b+\int_{E_{\pm,L}} VO(|h|_b)\,ds_b.
	\end{aligned}
\]

\end{proof}

In what follows, we present the essential estimates for each surface and edge. We use that the error tensor $h$ decays as $|h|_b=O(r^{-q})$ and the following formula
\begin{equation}\label{eqn:z-radial}
	r^2=\left(\cosh x_1+e^{x_1}\frac{|\hat{x}|^2}{2}\right)^2-1.
\end{equation}
\\

$\bullet$ On $F_{+,L}$:
\begin{equation}\label{eqn:estimate_top_face}
	\begin{aligned}
		&\int_{F_{+,L}}VO(|h|^2_b)\, d\sigma_b\\
		&\qquad\le C_1\omega_{n-2}\int_{0}^{\sigma(L)} e^L\left(\left(\cosh L+\frac{e^{L}\rho^2}{2}\right)^2-1\right)^{-q}e^{L(n-1)}\rho^{n-2}\,d\rho\\
		&\qquad\le C_2e^{L(n-2q)}\int_0^{\infty}\left(1+e^{-2L}+\rho^2\right)^{-2q}\rho^{n-2}\, d\rho\\
		&\qquad\le C_3e^{L(n-2q)}\int_{0}^{\infty}(1+e^{-2L}+\rho^2)^{-2q+\frac{n-3}{2}}\rho^{n-2-n+3}\, d\rho\\
		&\qquad\le \tilde{C}e^{L(n-2q)}.
	\end{aligned}
\end{equation}
Here, we need the condition $1-2q+\frac{n-3}{2}<0$, i.e., $q>\frac{n-1}{4}$ to get the last inequality. Note that $\sigma(L)$ does not affect this estimate.\\

$\bullet$ On $F_{-,L}$:\\
We estimate with the integrand $VO(|h|_b)$ to show that the term $V(H_b-H_g)$ on $F_{-,L}$ has no contribution to the mass.
\begin{equation}\label{eqn:estimate_bottom_face}	
	\begin{aligned}
		&\int_{F_{-,L}}VO(|h|_b)\, d\sigma_b\\
		&\qquad\le C_1\omega_{n-2}\int_{0}^{\sigma(L)} e^{-L}\left(\left(\cosh L+\frac{e^{-L}\rho^2}{2}\right)^2-1\right)^{-\frac{q}{2}}e^{-L(n-1)}\rho^{n-2}\,d\rho\\
		&\qquad\le C_2\cdot 4^q e^{L(-n-q)}\int_0^{\infty}\left(1+e^{-2L}+\rho^2e^{-2L}\right)^{-q}\rho^{n-2}\, d\rho\\
		&\qquad\le C_3 e^{L(-n-q)}\int_{0}^{\infty}(1+e^{-2L}+\rho^2e^{-2L})^{-q+\frac{n-3}{2}}\rho^{n-2-n+3}e^{L(n-3)}\, d\rho\\
		&\qquad\le \tilde{C}e^{L(-n-q)}e^{L(n-1)}=\tilde{C}e^{L(-1-q)}.
	\end{aligned}
\end{equation}
Similarly, we require the condition $1-2q+\frac{n-1}{2}<0$, i.e., $q>\frac{n+1}{4}$ to get the last inequality. Again, $\sigma(L)$ does not affect this estimate.\\

$\bullet$ On $E_{+,L}$:
\begin{equation}\label{eqn:estimate_top_edge}
	\begin{aligned}
		\int_{E_{+,L}}VO(|h|_b)\, d\sigma_b&\le C_1\omega_{n-2}\cdot e^{L}\left(\left(\cosh L+\frac{e^{L}\sigma(L)^2}{2}\right)^2-1\right)^{-\frac{q}{2}}e^{L(n-2)}\sigma(L)^{n-2}\\
		&\le C_2e^{L(n-1)}\sigma(L)^{n-2}\left(\frac{2}{e^{L}+e^{-L}+e^{L}\sigma(L)^2}\right)^{q}\\
		&\le C_3e^{L(n-1)}\sigma(L)^{n-2}e^{-qL}\sigma(L)^{-2q}\\
		&\le\tilde{C}e^{L(n-1-q)}\sigma(L)^{n-2-2q}.
	\end{aligned}
\end{equation}
Thus, to make the above integral vanish as $L\to\infty$, we need the following condition for $\sigma(L)$:
\begin{equation}\label{eqn:k_condition_1}
	\sigma(L)^{n-2-2q}=o(e^{L(q-n+1)})\text{ as }L\to\infty.
\end{equation}

Note that if the falloff rate $q\ge n-1$, $\sigma(L)$ being increasing to infinity is sufficient to show the above integral vanishes as $L\to\infty$.\\

$\bullet$ On $E_{-,L}$:
\begin{equation}\label{eqn:estimate_bottom_edge}
	\begin{aligned}
		&\int_{E_{-,L}}VO(|h|_b)\, d\sigma_b\\
		&\qquad\le C_1\omega_{n-2}\cdot e^{-L}\left(\left(\cosh L+\frac{e^{-L}\sigma(L)^2}{2}\right)^2-1\right)^{-\frac{q}{2}}e^{-L(n-2)}\sigma(L)^{n-2}\\
		&\qquad\le C_2e^{-L(n-1)}\sigma(L)^{n-2}\left(\frac{2}{e^{L}+e^{-L}+e^{-L}\sigma(L)^2}\right)^{q-\frac{n}{2}}\left(\frac{2}{e^{L}+e^{-L}+e^{-L}\sigma(L)^2}\right)^{\frac{n}{2}}\\
		&\qquad\le C_2e^{-L(n-1)}\sigma(L)^{n-2}o(1)\left(\frac{2}{e^{L}+e^{-L}+e^{-L}\sigma(L)^2}\right)^{\frac{n}{2}}\\
		&\qquad\le C_3e^{-L(n-1)}\sigma(L)^{n-2}o(1)e^{\frac{n}{2}L}\sigma(L)^{-n}\\
		&\qquad\le\tilde{C}e^{-\frac{n-2}{2}L}\sigma(L)^{-2}o(1).
	\end{aligned}
\end{equation}
The third inequality follows from the condition $q>\frac{n}{2}$. Hence, it follows that the above integral vanishes as $L\to\infty$ regardless of $\sigma(L)$.\\

$\bullet$ On $S_L$:\\
Similar to the case on $F_{-,L}$, we first estimate with the integrand $VO(|h|_b)$.
\begin{equation}\label{eqn:estimate_lateral_surface_1}
	\begin{aligned}
		&\int_{S_L}VO(|h|_b)\, d\sigma_b\\
		&\qquad\le C_1\omega_{n-2}\int_{-L}^{L}e^{x_1}\left(\left(\cosh x_1 +\frac{e^{x_1}\sigma(L)^2}{2}\right)^2-1\right)^{-\frac{q}{2}}e^{x_1(n-2)}\sigma(L)^{n-2}\, dx_1\\
		&\qquad\le C_2\int_{-L}^L e^{x_1(n-1)}\sigma(L)^{n-2}\left(\frac{2}{e^{x_1}+e^{-x_1}+e^{x_1}\sigma(L)^2}\right)^q\, dx_1.
	\end{aligned}
\end{equation}
We split the last integral into two parts: first, we have
\begin{equation}\label{eqn:estimate_lateral_surface_2}
	\begin{aligned}
		&\int_{0}^L e^{x_1(n-1)}\sigma(L)^{n-2}\left(\frac{2}{e^{x_1}+e^{-x_1}+e^{x_1}\sigma(L)^2}\right)^q\, dx_1\\
		&\le C\int_{0}^L e^{x_1(n-1-q)}\sigma(L)^{n-2-2q}\, dx_1\le
		\left\{\begin{array}{l}
			\tilde{C}_1 \sigma(L)^{n-2-2q}(e^{L(n-1-q)}+1),\vspace{3pt}\\
			\tilde{C}_2 \sigma(L)^{n-2-2q}(L^{-1}+1)
		\end{array}\right.
	\end{aligned}
\end{equation}
Hence, we need an additional condition to make this integral converge to zero, which turns out to be the same as \eqref{eqn:sigma_condition}.

Next, we have
\begin{equation}\label{eqn:estimate_lateral_surface_3}
	\begin{aligned}
		&\int_{-L}^0 e^{x_1(n-1)}\sigma(L)^{n-2}\left(\frac{2}{e^{x_1}+e^{-x_1}+e^{x_1}\sigma(L)^2}\right)^q\, dx_1\\
		&\le C\int_{0}^L e^{-x_1(n-1)}\sigma(L)^{n-2}\left(\frac{2}{e^{x_1}+e^{-x_1}+e^{-x_1}\sigma(L)^2}\right)^{q-\frac{n}{2}+\frac{n}{2}}\, dx_1\\
		&\le Co(1)\int_{0}^L e^{-\frac{n-2}{2}x_1}\sigma(L)^{-2}\, dx_1\\
		&\le \tilde{C}o(1)\, \sigma(L)^{-2}(e^{-\frac{n-2}{2}L}+1).
	\end{aligned}
\end{equation}
Therefore, the above integral vanishes as $L\to\infty$ regardless of $\sigma(L)$.

Now, we consider the term $\sigma(L)^{-1}O(|h|_b)$.
\begin{equation}\label{eqn:estimate_lateral_surface_4}
	\begin{aligned}
		&\int_{S_L}\sigma(L)^{-1}O(|h|_b)\, d\sigma_b\\
		&\qquad\le C_1\omega_{n-2}\int_{-L}^{L}\sigma(L)^{-1}\left(\left(\cosh x_1 +\frac{e^{x_1}\sigma(L)^2}{2}\right)^2-1\right)^{-\frac{q}{2}}e^{x_1(n-2)}\sigma(L)^{n-2}\, dx_1\\
		&\qquad\le C_2\int_{-L}^L e^{x_1(n-2)}\sigma(L)^{n-3}\left(\frac{2}{e^{x_1}+e^{-x_1}+e^{x_1}\sigma(L)^2}\right)^q\, dx_1.
	\end{aligned}
\end{equation}
Since $\sigma(L)^{-1}O(|h|_b)$ can be absorbed into $VO(|h|_b)$ for $0\le x_1\le L$, we only need to estimate for $-L\le x_1\le 0$:
\begin{equation}\label{eqn:estimate_lateral_surface_5}
	\begin{aligned}
		&\int_{-L}^0 e^{x_1(n-2)}\sigma(L)^{n-3}\left(\frac{2}{e^{x_1}+e^{-x_1}+e^{x_1}\sigma(L)^2}\right)^q\, dx_1\\
		&\le C_1\int_{0}^L e^{-x_1(n-2)}\sigma(L)^{n-3}\left(\frac{2}{e^{x_1}+e^{-x_1}+e^{-x_1}\sigma(L)^2}\right)^{q-n+1+n-1}\, dx_1\\
		&\le C_2\int_{0}^L e^{-x_1(n-2)}\sigma(L)^{n-3}e^{-x_1(q-n+1)}\sigma(L)^{-n+1}\, dx_1\\
		&\le \tilde{C}\sigma(L)^{-2}(e^{-(q-1)L}+1).
	\end{aligned}
\end{equation}
For the third inequality, we used the following:
\[
	\begin{aligned}
		\left(\frac{2}{e^{x_1}+e^{-x_1}+e^{-x_1}\sigma(L)^2}\right)^{q-n+1+n-1}&\le\left(\frac{2}{e^{x_1}}\right)^{q-n+1}\left(\frac{e^{x_1}+e^{-x_1}\sigma(L)^2}{2}\right)^{-n+1}\\
		&\le 2^{q-n+1} e^{-x_1(q-n+1)}\sigma(L)^{-n+1}.
	\end{aligned}
\]
Hence, the integral from \eqref{eqn:estimate_lateral_surface_5} converges to zero as $L\to\infty$ regardless of $\sigma(L)$.\\

Note that \eqref{eqn:estimate_top_edge} and \eqref{eqn:estimate_lateral_surface_2} are the only places that require an additional condition \eqref{eqn:k_condition_1}. Combining all, we obtain
\begin{equation}
	\begin{aligned}
		p_0-p_1&=\int_{\partial C_L}\mathbb{U}(V)(\nu_0)\, d\sigma_b+o(1) &\\
		&=2\int_{F_{+,L}} V(H_b-H_g)\, d\sigma_g+o(1),\\
	\end{aligned}
\end{equation}
which proves Theorem \ref{thm:AH_mass_top_face}.

\begin{remark}\label{rmk:why_not_cubic}
	From the above estimates, we observe that the metric falloff rate plays an interesting role. If $q> n-1$, then the mass formula \eqref{eqn:AH_mass_top_face} holds regardless of $\sigma(L)$. If $\frac{n}{2}<q\le n-1$, then the integral on $E_{+,L}$ may not converge to zero provided $\sigma(L)$ does not increase fast enough. Similarly, if $q> n-1$, the mean curvature difference on the lateral surface does not contribute to the mass by \eqref{eqn:estimate_lateral_surface_1}-\eqref{eqn:estimate_lateral_surface_5}. As the mass formula for asymptotically flat manifolds via large coordinate cubes is considered in \cite{Miao.2019}, one may attempt to use large rectangles $\{|x_1|\le L, |x_i|\le \sigma(L)\text{ for }i=2,\ldots,n\}$ instead of cylinders $C_L$. However, our estimates suggest that the dihedral angle deficit on edges and the mean curvature differences on the lateral faces and $\{x_1=-L\}$ will not affect the mass if $q> n-1$.
\end{remark}

By setting $\sigma(L)=e^{kL}$ for some $k>0$ and using the falloff condition $q>\frac{n}{2}$, one can reduce \eqref{eqn:k_condition_1} as
\[
	k(n-2-2q)+n-1-q>0\, \Rightarrow\, k\ge\frac{n-2}{4},
\]
which is independent of $q$. Thus, we have the following corollary.

\begin{corollary}
	Let $(M^n,g),n\ge 3,$ be an asymptotically hyperbolic manifold with metric falloff rate $q>\frac{n}{2}$. Define $\Sigma_L=\{y_1=e^{-L},|\hat{y}|<e^{\frac{n-2}{4}L}\}$. Let $\nu_g$ denote the unit normal vector to $\{y_1=e^{-L}\}$ in $(M,g)$ pointing toward $\{y_1=0\}$. Let $H_g$ be the mean curvature of $\Sigma_L$ with respect to $\nu_g$ in $(M,g)$. Then, as $L\to\infty$,
	\begin{equation}
		p_0-p_1=2\int_{\Sigma_L} V(H_b-H_g)\, d\sigma_g+o(1)
	\end{equation}
	where $V=t-z_1=\frac{1}{y_1}$ on $M\setminus K$.
\end{corollary}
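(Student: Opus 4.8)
The plan is to obtain this corollary as a direct specialization of Theorem \ref{thm:AH_mass_top_face}, taking the explicit profile $\sigma(L)=e^{\frac{n-2}{4}L}$. Since $n\ge 3$, this $\sigma(L)$ is positive, monotone increasing, and tends to infinity as $L\to\infty$, so the only thing that must be checked is that it satisfies the admissibility requirement \eqref{eqn:sigma_condition} in the range $\frac{n}{2}<q\le n-1$; for $q>n-1$ Theorem \ref{thm:AH_mass_top_face} imposes no further constraint and the conclusion is immediate. Thus essentially all of the analytic work—the estimates on the faces $F_{\pm,L}$, the edges $E_{\pm,L}$, and the lateral surface $S_L$—has already been carried out in the proof of Theorem \ref{thm:AH_mass_top_face}, and what remains is an elementary verification.

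First I would treat the open range $\frac{n}{2}<q<n-1$. Substituting $\sigma(L)=e^{kL}$ with $k=\frac{n-2}{4}$ into \eqref{eqn:sigma_condition}, the requirement $\sigma(L)^{n-2-2q}=o(e^{(q-n+1)L})$ amounts to the exponential inequality $k(n-2-2q)<q-n+1$. Because $q>\frac{n}{2}$ forces $n-2-2q<0$, dividing through and reversing the inequality rewrites this as $k>\frac{n-1-q}{2q+2-n}$. The key step is then to observe that the function $f(q)=\frac{n-1-q}{2q+2-n}$ is strictly decreasing on $(\frac{n}{2},n-1]$, since its numerator decreases while its (strictly positive) denominator increases. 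Hence its supremum over this interval is attained only in the limit $q\to\frac{n}{2}^{+}$, where $f=\frac{n/2-1}{2}=\frac{n-2}{4}=k$. As $q>\frac{n}{2}$ is strict for any fixed asymptotically hyperbolic metric, we get $f(q)<\frac{n-2}{4}=k$, so the inequality holds; this is exactly the computation recorded immediately before the corollary, and I would emphasize that the single universal exponent $k=\frac{n-2}{4}$ works simultaneously for every admissible $q$ precisely because of this monotonicity.

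For the borderline value $q=n-1$ one has $n-2-2q=-n$, so $\sigma(L)^{n-2-2q}=e^{-knL}$, and since exponential decay dominates any negative power of $L$ we certainly have $e^{-knL}=o(L^{-1})$ for $k>0$; thus the second alternative in \eqref{eqn:sigma_condition} is met. With \eqref{eqn:sigma_condition} verified in all cases, Theorem \ref{thm:AH_mass_top_face} applies verbatim with $\Sigma_L=\{y_1=e^{-L},\,|\hat y|<e^{\frac{n-2}{4}L}\}$ and $V=t-z_1=\frac{1}{y_1}$, giving the stated formula. I do not anticipate a genuine obstacle here: the entire content is the elementary inequality above, and the only point requiring a moment's care is confirming that one fixed choice of $k$ is admissible uniformly across the whole admissible range of falloff rates $q>\frac{n}{2}$.
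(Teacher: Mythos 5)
Your proposal is correct and follows essentially the same route as the paper: the paper likewise specializes Theorem \ref{thm:AH_mass_top_face} by setting $\sigma(L)=e^{kL}$, reduces \eqref{eqn:sigma_condition} to the exponent inequality $k(n-2-2q)<q-n+1$, and concludes that $k=\frac{n-2}{4}$ works uniformly in $q$ because $\sup_{q>n/2}\frac{n-1-q}{2q+2-n}=\frac{n-2}{4}$ is approached only as $q\to\frac{n}{2}^{+}$. Your write-up is in fact slightly more careful than the paper's two-line reduction, which records the inequality with its sign reversed (evidently a typo) and does not separately treat the borderline case $q=n-1$ that you verify explicitly.
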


Now we conclude the proof of Theorem \ref{thm:AH_mass_horospheres}.
\begin{proof}[Proof of Theorem \ref{thm:AH_mass_horospheres}]
	We only need to estimate the integral on $\{x_1=L\}\setminus F_{+,L}$:
	\begin{equation}\label{eqn:horosphere estimate}
		\begin{aligned}
			&\int_{\{x_1=L\}\setminus F_{+,L}}VO(|h|_b)d\sigma_b\\
			&\qquad\le C_1\omega_{n-2}\int_{\sigma(L)}^\infty e^L\left(\left(\cosh L+\frac{e^L\rho^2}{2}\right)^2-1\right)^{-\frac{q}{2}}e^{L(n-1)}\rho^{n-2}\, d\rho\\
			&\qquad\le C_2 e^{nL}\int_{\sigma(L)}^\infty \left(\frac{2}{e^L+e^{-L}+e^{L}\rho^2}\right)^{q}\rho^{n-2}\, d\rho\\
			&\qquad\le C_3 e^{L(n-q)}\int_{\sigma(L)}^\infty \rho^{n-2-2q}\, d\rho\\
			&\qquad\le\tilde{C}e^{L(n-q)}\sigma(L)^{n-1-2q}.
		\end{aligned}
	\end{equation}
	The last inequality implies that if $q=n$, then the above integral converges to zero as $L\to\infty$ regardless of $\sigma(L)$. If $\frac{n}{2}<q<n$, we let $\sigma(L)=e^{kL}$ with $k$ satisfying
	\begin{equation}\label{eqn:k_condition_3}
		k(n-1-2q)+n-q<0.
	\end{equation}
	Considering $q>\frac{n}{2}$, we can find the inequality independent of $q$:
	\begin{equation}\label{eqn:k_condition_b}
		k\ge\frac{n}{2}.
	\end{equation}

	Let $\sigma(L)=e^{\frac{n}{2}L}$. It is clear that $\sigma(L)$ satisfies the assumptions in Theorem \ref{thm:AH_mass_top_face}. Thus, by applying Theorem \ref{thm:AH_mass_top_face}, we get
	\begin{equation}\label{eqn:last step}
		\begin{aligned}
			p_0-p_1&=2\int_{\{x_1=L\}} V(H_b-H_g)\, d\sigma_g-2\int_{\{x_1=L\}\setminus F_{+,L}}VO(|h|_b)d\sigma_b+o(1)\\
			&=2\int_{\mathcal{H}_L} V(H_b-H_g)\, d\sigma_g+o(1).
		\end{aligned}
	\end{equation}
\end{proof}

\bigskip

\appendix
\section{Mass formulas via large spheres}
In this appendix, we apply \eqref{eqn:1form_formula} to coordinate spheres and obtain other mass formulas for both asymptotically flat and hyperbolic manifolds. Below, we recall the definition of asymptotically flat manifolds here (see section \ref{sec:introduction} for asymptotically hyperbolic manifolds).
\begin{definition}
	A Riemannian manifold $(M^n,g)$ is said to be asymptotically flat if there exist a compact set $K\subset M$ and a diffeomorphism $\Phi:M\setminus K\to \mathbb{R}^n\setminus B_R(0)$ such that
	\begin{enumerate}
		\item as $r\to\infty$,
		\[
			|g_{ij}-\delta_{ij}|+r|\partial_{k}g_{ij}|+r^2|\partial_l\partial_k g_{ij}|=O(r^{-q}),\quad q>\frac{n-2}{2}.
		\]
		\item $\int_M R_g d\mu_g <\infty$ where $R_g$ is the scalar curvature of $g$.
	\end{enumerate}
\end{definition}
Let $h=(\Phi^{-1})^*g-\delta$. Then the condition (1) above can be written equivalently as
\[
	|h|_\delta+r|\mathring{\nabla}h|_\delta+r^2|\mathring{\nabla}^2 h|_\delta=O(r^{-q}),\quad q>\frac{n-2}{2}.
\]
The ADM mass (or energy) \cite{ADM:1961} of $(M^n,g)$ is defined as
\begin{equation}\label{eqn:ADM mass}
	\begin{aligned}
		m_{ADM}(g)&=\frac{1}{2(n-1)\omega_{n-1}}\lim_{r\to\infty}\int_{S_r}(g_{ij,j}-g_{jj,i})\,\nu_0^i\, d\sigma_0\\
		&=\frac{1}{2(n-1)\omega_{n-1}}\lim_{r\to\infty}\int_{S_r}\mathbb{U}(1)(\nu_0)\, d\sigma_0
	\end{aligned}
\end{equation}
where the one form $\mathbb{U}$ is defined as \eqref{eqn:mass_1_form} (with the background metric $\delta$ instead of the hyperbolic metric $b$). As mentioned in the introduction, it is known that $m_{ADM}(g)$ is invariant under the choice of coordinates. (R. Bartnik \cite{Bartnik:1986dq}, P. Chru\'sciel \cite{Chrusciel:1986-geometric})

Now, we state the formulas obtained by using \eqref{eqn:1form_formula} to $S_r$. 
\begin{proposition}\label{prop:mass formulas}
	If $(M^n,g)$ is an asymptotically flat manifold, then the ADM mass can be computed as
	\[
		m_{ADM}(g)=\frac{1}{(n-1)\omega_{n-1}}\lim_{r\to\infty}\left[\left(\int_{S_r}\frac{n-1}{r}-H_g\right)d\sigma_g+\frac{1}{r}(|S_r|_\delta-|S_r|_g)\right]
	\]
	where $|S_r|_\delta$ and $|S_r|_g$ are the area of $S_r$ with respect to Euclidean metric and the metric $g$, respectively. Similarly, if $(M^n,g)$ is an asymptotically hyperbolic manifold, then the mass vector can be obtained from
	\[
		\begin{aligned}
			p_0&=2\lim_{r\to\infty}\left[\int_{S_r}\sqrt{1+r^2}\left(\frac{\sqrt{1+r^2}}{r}(n-1)-H_g\right)d\sigma_g+\frac{1}{r}(|S_r|_b-|S_r|_g)\right],\\
			p_i&=2\lim_{r\to\infty}\int_{S_r}z_i\left(\frac{\sqrt{1+r^2}}{r}(n-1)-H_g\right)d\sigma_g.
		\end{aligned}
	\]
\end{proposition}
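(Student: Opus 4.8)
The plan is to apply the pointwise identity \eqref{eqn:1form_formula} of Proposition~\ref{prop:1form} with $\Sigma=S_r$ the coordinate sphere, taking $g_0=\delta$ and $V=1$ in the asymptotically flat case, and $g_0=b$ with $V\in\{t,z_1,\ldots,z_n\}$ in the asymptotically hyperbolic case. The decisive structural feature is that $S_r$ is closed, so upon integration the total-divergence term $\int_{S_r}\mathrm{div}_\Sigma(VX)\,d\sigma_{g_0}$ vanishes; unlike the parabolic-cylinder argument of Section~\ref{sec:proof}, there are no boundary edges to track. Integrating \eqref{eqn:1form_formula} over $S_r$ therefore reduces the mass integral to the mean-curvature term $\int_{S_r}2V(H_{g_0}-H_g)$, the two trace terms $\int_{S_r}\big[(\mathrm{tr}_{g_0}^\Sigma h)\,dV(\nu_0)-V\langle A_{g_0},h\rangle_{g_0|\Sigma}\big]$, and the explicit quadratic remainders. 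Both cases run through the same computation, only differing in the choice of $V$ and $g_0$.

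First I would record the geometry of coordinate spheres, which are umbilic: in Euclidean space $A_\delta=r^{-1}(\delta|_{S_r})$ with $H_\delta=(n-1)/r$, while in hyperbolic space $A_b=\tfrac{\sqrt{1+r^2}}{r}(b|_{S_r})$ with $H_b=(n-1)\tfrac{\sqrt{1+r^2}}{r}$ (seen cleanly via $r=\sinh\rho$, where the spheres have principal curvatures $\coth\rho$). With the outward unit normal $\nu_0=\sqrt{1+r^2}\,\partial_r$ one computes $dt(\nu_0)=r$ and $dz_i(\nu_0)=\tfrac{\sqrt{1+r^2}}{r}z_i$. Feeding these into the trace terms produces the key cancellations: for $V=z_i$ the two trace terms cancel identically, leaving no area contribution; for $V=t$ they combine to $-\tfrac{1}{r}\,\mathrm{tr}_b^\Sigma h$; and for $V=1$ in the flat case the $dV$ term drops, leaving $-\tfrac{1}{r}\,\mathrm{tr}_\delta^\Sigma h$.

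Next I would convert the residual trace integral into the area defect. A first-order expansion of the induced area element gives $d\sigma_g=\big(1+\tfrac12\mathrm{tr}_{g_0}^\Sigma h+O(|h|_{g_0}^2)\big)\,d\sigma_{g_0}$, hence $-\tfrac{1}{r}\int_{S_r}\mathrm{tr}_{g_0}^\Sigma h\,d\sigma_{g_0}=\tfrac{2}{r}\big(|S_r|_{g_0}-|S_r|_g\big)+\tfrac{1}{r}\int_{S_r}O(|h|_{g_0}^2)$. The same expansion lets me replace $d\sigma_{g_0}$ by $d\sigma_g$ inside the mean-curvature term at the cost of an extra error of the form $V\,O(|h|_{g_0}^2)+V\,O(|\mathring{\nabla}h|_{g_0}|h|_{g_0})$. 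After dividing by the normalizing constant $2(n-1)\omega_{n-1}$ in the flat case, and rewriting $H_b-H_g=\tfrac{\sqrt{1+r^2}}{r}(n-1)-H_g$ in the hyperbolic case, the three claimed formulas emerge.

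The main work, and the only genuine obstacle, is to show that every error term tends to zero, so that the \emph{equality of limits} holds even though the individual geometric integrands need not converge in isolation. All errors arise from the quadratic remainders in \eqref{eqn:1form_formula}, the area-element expansion, and the $d\sigma_{g_0}\to d\sigma_g$ conversion, and each has the schematic form $V\,O(|h|_{g_0}^2)+V\,O(|\mathring{\nabla}h|_{g_0}|h|_{g_0})$, with an extra factor $r^{-1}$ on some terms. Using $|S_r|_{g_0}\sim r^{n-1}$, the boundedness of $|A_b|_b$, and the decay $|h|,|\mathring{\nabla}h|=O(r^{-q})$, these integrate to $O(r^{n-2q})$ in the hyperbolic case and $O(r^{n-2-2q})$ in the flat case, which vanish precisely under the standing hypotheses $q>\tfrac n2$ and $q>\tfrac{n-2}{2}$. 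Taking $r\to\infty$ then identifies the two sides with $p_0$, $p_i$, and $m_{ADM}(g)$, respectively.
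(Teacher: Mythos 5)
Your proposal is correct and follows essentially the same route as the paper's own proof: both apply Proposition \ref{prop:1form} on the umbilic coordinate spheres with the same choices of $g_0$ and $V$, use closedness of $S_r$ to discard the divergence term, obtain the identical trace-term cancellations (zero for $V=z_i$, $-\tfrac{1}{r}\mathrm{tr}^{S_r}h$ for $V=t$ and $V=1$), convert the residual trace into the area defect via the first-order expansion of the area element, and bound all quadratic remainders by $O(r^{n-2q})$ in the hyperbolic case and $O(r^{n-2-2q})$ in the flat case, which vanish under the standing decay hypotheses. There is nothing to correct.
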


\begin{remark}
	One can derive the same mass formula for asymptotically locally hyperbolic metrics. For example, if $(M^n,g)$ is an asymptotically locally hyperbolic manifold asymptotic to a model space $[r_0,\infty)\times N^{n-1}$ equipped with 
	the metric $ b = \frac{dr^2}{\kappa + r^2}+r^2 h$, where $r_0>\sqrt{|\kappa|}$ and $(N^{n-1},h)$ is an $(n-1)$-dimensional 
	space form with constant sectional curvature $\kappa$ (see \cite{Chrusciel-Herzlich:2003} for the precise definition), then the mass $p_0$ of $(M, g)$ can be computed as the following:
	\[
		p_0 = 2\lim_{r\to\infty}\left[\int_{S_r}\sqrt{\kappa +r^2}\left(\frac{\sqrt{\kappa +r^2}}{r}(n-1)-H_g\right)d\sigma_g+\frac{\kappa}{r}(|S_r|_b-|S_r|_g)\right].
	\]
\end{remark}


\begin{proof}[Proof of Proposition \ref{prop:mass formulas}]
	First, suppose that $(M^n,g)$ is asymptotically flat and $g_0=\delta$ is Euclidean metric. On coordinate spheres $S_r$, we have
	\[
		A_{\delta}=\frac{1}{r}\delta|_{S_r}.
	\]
	By Proposition \ref{prop:1form}, we have on $S_r$,
	\[
		\begin{aligned}
			\mathbb{U}(1)(\nu_0)&=2(H_{\delta}-H_g)+|A_{\delta}|_{\delta}O(|h|_{\delta}^2)+O(|\mathring{\nabla}h|_{\delta}|h|_{\delta})-V\langle A_{\delta},h\rangle_{\delta|_{S_r}}-\mathrm{div}_{S_r} X\\
			&=2(H_{\delta}-H_g)-\frac{1}{r}\mathrm{tr}_\delta^{S_r}h-\mathrm{div}_{S_r} X+O(r^{-2q-1}).
		\end{aligned}
	\]
	By integrating it on $S_r$, we get
	\[
		\begin{aligned}
			\int_{S_r}\mathbb{U}(1)(\nu_0)\, d\sigma_{\delta}&=\int_{S_r}\left[2(H_\delta-H_g)-\frac{1}{r}\mathrm{tr}_\delta^{S_r}h+O(r^{-2q-1})\right]\, d\sigma_\delta\\
			&=\int_{S_r}2\left(H_\delta-H_g\right)d\sigma_g-\int_{S_r}\left(H_\delta-H_g+\frac{1}{r}\right)\mathrm{tr}_\delta^{S_r}h\, d\sigma_\delta + O(r^{n-2q-2})\\
			&=\int_{S_r}2\left(H_\delta-H_g\right)d\sigma_g-\frac{1}{r}\int_{S_r}\mathrm{tr}_{\delta}^{S_r}h\, d\sigma_\delta+O(r^{n-2q-2})\\
			&=\int_{S_r}2\left(H_\delta-H_g\right)d\sigma_g+\frac{2}{r}(|S_r|_{\delta}-|S_r|_{g})+O(r^{n-2q-2})
		\end{aligned}
	\]
	By taking the limit $r\to\infty$, we can conclude the formula for the ADM mass.

	Now we suppose that $(M^n,g)$ is asymptotically hyperbolic and $g_0=b$ is the hyperbolic metric. On coordinate spheres $S_r$, we have
	\[
		A_{b}=\frac{\sqrt{1+r^2}}{r}b|_{S_r}.
	\]
	We first show the formula for $p_0(g)$. Let $V=\sqrt{1+r^2}$. By direct computation, we have
	\[
		\begin{aligned}
			&dV(\nu_0)=\frac{r}{\sqrt{1+r^2}}\,dr\left(\sqrt{1+r^2}\,\frac{\partial}{\partial r}\right)=r,\\
			&(\mathrm{tr}_b^{S_r} h)dV(\nu_0)-V\langle A_b,h\rangle|_{S_r}=(\mathrm{tr}_b^{S_r} h)\left(r-\sqrt{1+r^2}\cdot\frac{\sqrt{1+r^2}}{r}\right)=-\frac{1}{r}\mathrm{tr}_b^{S_r}h.
		\end{aligned}
	\]
	By Proposition \ref{prop:1form}, we have on $S_r$,
	\[
		\begin{aligned}
			\mathbb{U}(V)(\nu_0)&=2V(H_{\delta}-H_g)-\frac{1}{r}\mathrm{tr}_\delta^{S_r}h-\mathrm{div}_{S_r} X+O(r^{-2q}).
		\end{aligned}
	\]
	By integrating it on $S_r$, we get
	\[
		\begin{aligned}
			\int_{S_r}\mathbb{U}(V)(\nu_0)\, d\sigma_{b}&=\int_{S_r}2V\left(H_b-H_g\right)d\sigma_g+\frac{2}{r}(|S_r|_{\delta}-|S_r|_{g})+O(r^{n-2q}).
		\end{aligned}
	\]
	So we conclude the formula of $p_0(g)$. Let $V=z_i$ for $i=1,\ldots,n$. Then we have
	\[
		\begin{aligned}
			&dV(\nu_0)=dz_i\left(\sqrt{1+r^2}\,\frac{\partial}{\partial r}\right)=\sqrt{1+r^2}\,\frac{z_i}{r},\\
			&(\mathrm{tr}_b^{S_r} h)dV(\nu_0)-V\langle A_b,h\rangle|_{S_r}=(\mathrm{tr}_b^{S_r} h)\left(\sqrt{1+r^2}\,\frac{z_i}{r}-z_i\cdot\frac{\sqrt{1+r^2}}{r}\right)=0.
		\end{aligned}
	\]
	By Proposition \ref{prop:1form}, we have on $S_r$,
	\[
		\begin{aligned}
			\mathbb{U}(V)(\nu_0)&=2V(H_{\delta}-H_g)-\mathrm{div}_{S_r} X+O(r^{-2q+1}).
		\end{aligned}
	\]
	By integrating it on $S_r$, we get
	\[
		\begin{aligned}
			\int_{S_r}\mathbb{U}(V)(\nu_0)\, d\sigma_{b}&=\int_{S_r}2z_i\left(H_b-H_g\right)d\sigma_g+O(r^{n-2q}).
		\end{aligned}
	\]
	This completes the proof.
\end{proof}

\bigskip

\noindent {\bf Acknowledgements.}
The authors would like to thank the organizers of the 2020 Virtual Workshop on Ricci and Scalar Curvature, which provided a lot of inspiring talks. Especially, the plenary talk of Chao Li influenced the initial idea of this work. H. C. Jang was partially supported by NSF Grant DMS-1612049 for participating in the workshop and would like to thank Christina Sormani for funding. P. Miao acknowledges the support of NSF Grant DMS-1906423. The authors are also grateful to Erwann Delay for his interest and insightful comments.

\bibliography{AH_mass_draft}
\end{document}